\documentclass[10pt]{amsart}
\usepackage{graphicx}
\usepackage{amscd}
\usepackage{amsmath}
\usepackage{amsthm}
\usepackage{amsfonts}
\usepackage{amssymb}
\usepackage{mathrsfs}
\usepackage{bm}
\usepackage{enumerate}
\usepackage{amsrefs}
\usepackage{xcolor}
\usepackage[colorlinks, citecolor=blue, linkcolor=red, pdfstartview=FitB]{hyperref}
\usepackage{comment}
\usepackage{hyperref}

\usepackage{tikz}
\usepackage{tikz-cd}
\usepackage{caption}
\usetikzlibrary{decorations.markings}
\usepackage{lipsum}
\usepackage{mathrsfs}

\usepackage{marginnote}	
\usepackage{soul} 

\newcommand{\bC}{{\mathbb{C}}}

\newcommand{\bF}{{\mathbb{F}}}

\newcommand{\bM}{{\mathbb{M}}}
\newcommand{\bN}{{\mathbb{N}}}

\newcommand{\bT}{{\mathbb{T}}}

\newcommand{\bZ}{{\mathbb{Z}}}

  \newcommand{\B}{{\mathcal{B}}}

  \newcommand{\F}{{\mathcal{F}}}
  
\renewcommand{\H}{{\mathcal{H}}}

  \newcommand{\K}{{\mathcal{K}}}  

  \newcommand{\M}{{\mathcal{M}}}

\renewcommand{\S}{{\mathcal{S}}}
  \newcommand{\T}{{\mathcal{T}}}

\newcommand{\rC}{\mathrm{C}}

\renewcommand{\phi}{\varphi}

\newcommand{\upchi}{{\raise.35ex\hbox{$\chi$}}}

\newcommand{\ol}{\overline}

\newcommand{\mycomment}[1]{}

\newcommand{\conv}{\operatorname{conv}}

\newtheorem{theorem}{Theorem}[section]
\newtheorem*{theorem*}{Theorem}
\newtheorem{lemma}[theorem]{Lemma}
\newtheorem*{lemma*}{Lemma}
\newtheorem{corollary}[theorem]{Corollary}

\newtheorem{proposition}[theorem]{Proposition}

\newtheorem{theoremx}{Theorem}

\date{\today}

\author{Ian Thompson}
\address{Department of Mathematical Sciences, University of Copenhagen, Universitetsparken 5, 2100 Copenhagen, Denmark}
\email{ian@math.ku.dk\vspace{-1ex}}

\subjclass[2020]{47A20, 47A13, 46L07}

\keywords{Dilation, free unitaries, commuting normal dilation}

\thanks{The author was partially supported by an NSERC Postdoctoral Fellowship.}

\title[The universal commuting dilation constant]{On the universal commuting dilation constant}
\begin{document}
\begin{abstract}
    The universal commuting dilation constant $C_d$ is the smallest constant $\alpha$ such that every $d$-tuple of contractions dilates to a commuting $d$-tuple of normal operators with norm at most $\alpha$. The work of several authors shows that $1.5438 \lesssim C_2\leq 2$, and it has been asked on a few accounts whether $C_2<2$. We provide a positive answer that, in fact, produces a near optimal upper bound of $C_2 \leq \frac{2}{\sqrt{\phi}}$ where $\phi$ is the golden ratio. This tightens the gap on the universal commuting dilation constant to $1.5438 \lesssim C_2 \lesssim 1.5724$. We also tighten the known upper and lower bounds on $C_d$ for arbitrary $d$-tuples.
\end{abstract}
\maketitle

\section{Introduction}\label{s:intro}

One of the keystone results of operator theory is the Sz.-Nagy dilation theorem: a contractive (Hilbert space) operator admits a power dilation to a unitary operator \cite{sznagy1954contractions}. The Sz.-Nagy theorem lies at the epicentre of rich interactions between operator theory and complex function theory \cites{agler2002pick,nagy2010harmonic}. Since then, the implementation of dilation theoretic techniques has become ubiquitous to both operator theory and operator algebras. In addition, dilation theory offers a wide arsenal of tools that has had meaningful applications in quantum information theory \cites{chakraborty2019power,de2020quantum, hu2020quantum, manvcinska2024constant} and applied mathematics \cites{ben2002tractable, clouatre2025lifting, helton2019dilations, li2024simple}.

A leading problem in dilation theory today concerns dilation theorems that hold only up to a constant. The central question is not only to determine the existence of such constants, but also their optimal values. This problem has attracted significant attention in recent years \cites{gerhold2021dilations, gerhold2022dilations, gerhold2025empirical}, as it encodes inclusion problems for matrix convex sets or, equivalently, interpolation phenomena for completely positive maps \cites{passer2018minimal, passer2019shape, davidson2017dilations, fritz2017spectrahedral}. We remark that the idea itself has also seen increasing relevance in quantum information theory \cites{bluhm2018joint, bluhm2020compatibility, bluhm2023polytope, bluhm2025inclusion, bluhm2022incompatibility}. For our purposes, our principal interests lie in the work of Gerhold--Shalit and collaborators \cites{gerhold2021dilations, gerhold2022dilations, gerhold2025empirical}. Therein, numerous dilation theorems up to a constant have been discovered by using a variety of techniques ranging from random matrix theory, free probability theory, and mathematical physics.

We are concerned with a central problem in this direction: finding the true value of the so-called universal commuting dilation constant $C_d$. The constant $C_d$ is defined as the smallest value $\alpha$ so that every $d$-tuple of contractions admits a dilation to a $d$-tuple of commuting normal operators each with norm at most $\alpha$. This notion, in particular, has had applications in studying the joint measurability of quantum effects \cite{bluhm2018joint}. In the case of $d=1$, the dilation constant $C_d$ is equal to one and, in fact, the Sz.-Nagy theorem produces a power dilation with these properties. More broadly, it was shown in \cite[Section 7]{davidson2017dilations} that $C_d\leq d$ for every positive integer $d$. Due to the work of Passer and Passer--Shalit--Solel \cites{passer2018minimal, passer2019shape}, this was further improved to $\sqrt{d}\leq C_d \leq \sqrt{2d}$. By using pairs of certain $q$-commuting unitaries \cite[Section 7]{gerhold2022dilations}, it has also been shown that $1.5438\lesssim C_2$. On the other hand, determining an upper bound for $C_2$ that is stronger than the initial estimate \cite[Section 7]{davidson2017dilations} has remained completely elusive. This is despite an intensive treatment surrounding the problem \cites{gerhold2021dilations, gerhold2022dilations, passer2018minimal, passer2019shape}, including a recent empirical approach by estimating the corresponding commuting dilation constant for a pair of free Haar unitaries \cite{gerhold2025empirical}. Therein, it was suggested that $C_2\leq 2\sqrt{2/3}$ as this directly follows from combining the known \emph{optimal} constant for dilating free unitaries to Haar unitaries \cite[Corollary 3.8]{gerhold2021dilations} with the \emph{best possible} constant for dilating Haar unitaries to commuting unitaries. The main result of this work produces an upper bound on $C_2$ that is stronger than this and, in fact, is nearly optimal.

\begin{theoremx}\label{t:a}
    Given a pair of contractions $T_1, T_2\in B(\H)$ on a Hilbert space $\H$, there is a Hilbert space $\K\supseteq\H$ and a pair of commuting normal contractions $N_1, N_2\in B(\K)$ such that
    \[
        T_i =  P_\H \left(\frac{2}{\sqrt{\phi}} N_i \right)|_{\H}, \quad i=1,2,
    \]
    where $\phi$ denotes the golden ratio. In other words, the universal commuting dilation constant $C_2$ is bounded above by $\frac{2}{\sqrt{\phi}}$.
\end{theoremx}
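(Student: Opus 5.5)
The plan is to reduce to a universal pair of unitaries and then build an explicit positive operator-valued measure on a scaled torus whose first moments are those unitaries.

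\textbf{Step 1: reduction to free unitaries.} By the Sz.-Nagy theorem (or simply the Halmos dilation), each $T_i$ dilates to a unitary on a common larger space. So it suffices to treat a pair of unitaries $U_1,U_2$. Every such pair is a $*$-representation of $\mathrm{C}^*(\mathbb{F}_2)$, and compressing a dilation of the universal pair again gives a dilation. It is therefore enough to dilate the universal (free) unitaries $u_1,u_2\in \mathrm{C}^*(\mathbb{F}_2)\subseteq B(\H)$.

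\textbf{Step 2: reformulation as a POVM problem.} A commuting normal dilation with joint spectrum in a compact set $K\subseteq \alpha\overline{\mathbb{D}}^2$ is the same as a unital completely positive map $\Psi\colon \rC(K)\to B(\H)$ with $\Psi(z_i)=u_i$. This is the case because $\rC(K)$ is commutative, so Stinespring/Naimark applies to such maps. Equivalently, we need a positive operator valued measure $E$ on $K$ with
\[
\int_K z_i\, dE(z)=u_i .
\]
I would take $K=\alpha\mathbb{T}^2$ and $dE(\theta,\psi)=F(\theta,\psi)\,\frac{d\theta\, d\psi}{4\pi^2}$. Here $F$ is a trigonometric polynomial in $e^{i\theta},e^{i\psi}$ with coefficients in the $*$-algebra generated by $u_1,u_2$.

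\textbf{Step 3: choice of density.} To guarantee positivity for free, I would choose a sum of squares
\[
F(\theta,\psi)=X(\theta,\psi)^*X(\theta,\psi),\qquad X=a+b\,e^{-i\theta}u_1+c\,e^{-i\psi}u_2+d\,e^{-i(\theta+\psi)}u_2u_1+\dots,
\]
possibly a finite sum of such squares with different word orderings. The constraints then come out of Fourier analysis using $u_i^*u_i=1$:
\begin{itemize}
\item Normalization: the zeroth Fourier coefficient of $F$ must be $1$, which gives $|a|^2+|b|^2+|c|^2+\dots=1$.
\item First moments: the $e^{-i\theta}$ coefficient must be $u_1/\alpha$, and similarly for $u_2$.
\end{itemize}
Terms such as $\bar a b\,u_1$ and $\bar c d\,u_2^*u_2u_1=\bar c d\,u_1$ contribute to the $u_1$ coefficient. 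Terms that would produce words other than $u_1$ must cancel or be avoided, and a single cross word like $u_2u_1$ is designed to reuse mass in this way.

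\textbf{Step 4: optimization, the main obstacle.} Maximizing the total $u_1$-coefficient (symmetrically the $u_2$-coefficient) subject to normalization is a quadratic-form eigenvalue problem. The crude linear choice $F=2+\mathrm{Re}(e^{-i\theta}u_1)+\mathrm{Re}(e^{-i\psi}u_2)$, suitably normalized, only gives $\alpha=4$. Sums of squares should do much better. I expect the extremal eigenvalue of a small tridiagonal or path-type coupling matrix, with $a,b,c,d$ along a chain, to involve $\cos(\pi/5)=\phi/2$, which would yield $\alpha=2/\sqrt{\phi}$.

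The hardest part is finding the right ansatz for $X$: which words to include, and how to make the unwanted words cancel using freeness. The fallback is to enlarge $K$ beyond the torus, or to average over several orderings of the words. Once the coefficients are found, verifying $\int z_i\,dE=u_i$ and $E\geq 0$ is routine. Naimark's theorem then produces commuting normal $N_i$ with $\|N_i\|\leq 1$ and $u_i=P_\H(\alpha N_i)|_\H$, and compressing back through Step 1 gives the statement for $T_1,T_2$.
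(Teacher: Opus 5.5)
\textbf{There is a genuine gap: no construction is given, and the one sketched fails.} Your Steps 1 and 2 are a correct reformulation and agree with the paper's. It suffices to produce a unital completely positive map $\rC(\bT^2)\to\rC^*(\bF_2)$ with $z\mapsto\frac{\sqrt{\phi}}{2}u_1$ and $w\mapsto\frac{\sqrt{\phi}}{2}u_2$. Your choice of coefficients in the $*$-algebra of $u_1,u_2$ correctly keeps the values in $\rC^*(\bF_2)$.

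Everything after that is an expectation rather than a construction. No density $F$ is exhibited, so no moment identity is ever checked, and nothing connects $\cos(\pi/5)$ to the constant $2/\sqrt{\phi}$.

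The ansatz you do write down cannot work. For $X=a+b\,e^{-i\theta}u_1+c\,e^{-i\psi}u_2+d\,e^{-i(\theta+\psi)}u_2u_1$, the $e^{-i\psi}$-coefficient of $X^*X$ is $\bar a c\,u_2+\bar b d\,u_1^*u_2u_1$. Since $u_1^*u_2u_1$ and $u_2$ are linearly independent in $\rC^*(\bF_2)$, you are forced to take $\bar b d=0$ (or $\sum_k\bar b_kd_k=0$ for a sum of such squares). Then the best this ansatz gives, even after symmetrizing in $u_1\leftrightarrow u_2$, is $\alpha=2\sqrt2$, which is worse than the known bound $C_2\le 2$.

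If you ignored the spurious word, the symmetrized coefficient $\frac12\mathrm{Re}(\bar ab+\bar ac+\bar cd)$ would be governed by the path $b-a-c-d$. Its adjacency matrix has top eigenvalue $\phi=2\cos(\pi/5)$, which is presumably the $\cos(\pi/5)$ you anticipated. But it yields only $\alpha=4/\phi\approx 2.47$, not $2/\sqrt{\phi}$. The underlying obstruction is that any word that ``reuses mass'' for $u_1$ also creates conjugates $u_1^{m}u_2u_1^{-m}$ in the $u_2$-moment. Your outline has no mechanism for removing them.

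\textbf{The missing idea.} Removing those conjugates is precisely the key step of the paper. It uses the commuting ``shift-and-twist'' pair $Z(h\otimes\delta_m\otimes\eta)=\mathfrak{u}_ah\otimes\delta_{m+1}\otimes\eta$, $W(h\otimes\delta_m\otimes\eta)=\mathfrak{u}_{a^mba^{-m}}h\otimes\delta_m\otimes S\eta$ on $\H\otimes\ell^2(\bZ)\otimes\bC^2$, with $S=\mathrm{diag}(1,-1)$. It compresses by $h\mapsto h\otimes\xi$, where $\xi=\sum_m\delta_m\otimes\xi_m$. Then $w\mapsto\sum_m\langle S\xi_m,\xi_m\rangle\,\mathfrak{u}_{a^mba^{-m}}$.
\begin{enumerate}
\item Choosing $\xi_m=\alpha_{|m|}(e_1+e_2)/\sqrt2$ for $m\neq0$ makes every conjugate cancel between the two coordinates of $\bC^2$.
\item A tilted $\xi_0$ with $\|\xi_0\|^2=p$ and $\langle S\xi_0,\xi_0\rangle=r$ leaves $w\mapsto r\,\mathfrak{u}_b$.
\item A geometric tail $\alpha_m$ maximizes $\sum_m\langle\xi_m,\xi_{m+1}\rangle$, giving $z\mapsto\sqrt{(1-p)(1+\sqrt{p^2-r^2})}\,\mathfrak{u}_a$.
\item Averaging with the $a\leftrightarrow b$ swap and optimizing over $(p,r)$ gives exactly $\sqrt{\phi}/2$.
\end{enumerate}

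\textbf{What this would look like in your framework.} It corresponds to densities $F=\frac{1}{N+1}\sum_{j=1,2}(Y_jX_j)^*(Y_jX_j)$ with $X_j(\theta)=\sum_m\xi_m^{(j)}e^{im\theta}u_1^{-m}$ and $Y_j(\psi)=\sum_{k=0}^{N}(\epsilon_je^{-i\psi}u_2)^k$. Here $\xi_m^{(j)}$ is the $j$-th coordinate of $\xi_m$, and $\epsilon_1=1$, $\epsilon_2=-1$. So the right sum of squares is a two-term, sign-twisted one with words of unbounded length, not a short chain. Its moments (after the $u_1\leftrightarrow u_2$ averaging) only approach $\frac{\sqrt{\phi}}{2}u_i$ as $N\to\infty$ and the truncation in $m$ grows. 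You would therefore also need a compactness argument to obtain the exact constant $2/\sqrt{\phi}$ asserted in the theorem.
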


Theorem \ref{t:a} then gives that $1.5438\lesssim C_2 \lesssim 1.5724$. For the strategy that we consider, the constant $\frac{2}{\sqrt{\phi}}$ was selected to be essentially the best possible conclusion relative to a few optimization arguments. We include the details in Appendix \ref{s:optimization}.

We then turn our attention to the value of the universal commuting dilation constant $C_d$ for general $d$-tuples of operators. First, we bootstrap the construction afforded to us by Theorem \ref{t:a} to obtain a recursive upper bound on $C_d$ (Theorem \ref{t:recursive-bound}). This allows us to conclude that $1.858\lesssim C_3\lesssim 2.025$, which improves upon the previous upper bound of $\sqrt{6}\approx 2.449$. Then, we exploit this recursion to obtain both a strict improvement to the upper bound on $C_d$ for general $d$-tuples of operators, as well as an asymptotic upper bound (Theorem \ref{t:asy-bound}). We are then naturally led to uncover new methods for finding stronger \emph{lower} bounds on $C_d$ for arbitrary $d$-tuples of operators. The collection of these results are summarized as follows.

\begin{theoremx}\label{t:b}
    The following statements hold.
    \begin{enumerate}[{\rm (i)}]
        \item For every positive integer $d\geq 2$, we have that $\sqrt{d}< C_d<\sqrt{2d}$.
        \item We have that $C_d \leq \sqrt{2d-\log d + O(1)}$ as $d\rightarrow\infty$.
    \end{enumerate}
\end{theoremx}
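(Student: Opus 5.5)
Both parts rest on a recursive bound (the paper's Theorem \ref{t:recursive-bound}), obtained by bootstrapping the construction behind Theorem \ref{t:a}. The recursion I would aim for is
\[
C_{d+1}^2 \le C_d^2 + \alpha
\qquad\text{or, more generally,}\qquad
C_{d+k}^2 \le C_d^2 + C_k^2 - \delta_{d,k},
\]
with an explicit defect $\delta_{d,k}>0$ whenever both blocks are nontrivial.

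The mechanism for the recursion is as follows. Split the $(d+k)$-tuple into a $d$-tuple and a $k$-tuple. Dilate each block separately to commuting normals. Then combine the two blocks using the same device that produced $\frac{2}{\sqrt{\phi}}$ from the Passer--Shalit--Solel bound $\sqrt{2d}$: the dilations become tensor products $N\otimes 1$ and $1\otimes M$, and their norms combine in quadrature.

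The quadrature heuristic comes from matrix convex set containment. The minimal matrix convex set over the unit polydisc is scaled into the commuting one via a product of the two blocks' scalings, and norms add like $\sqrt{a^2+b^2}$ because the relevant extreme points lie on a sphere, as in the proof of $C_d\le\sqrt{2d}$. I would take this from the earlier sections of the paper.

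For (i), the upper bound $C_d<\sqrt{2d}$ then follows by induction. The base cases are $C_1=1<\sqrt2$ and $C_2\le 2/\sqrt{\phi}<2$. The recursion with $k=2$ then gives
\[
C_d^2 \le C_{d-2}^2 + \frac{4}{\phi} < 2(d-2) + 4 = 2d,
\]
with strict inequality inherited from the base cases.

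For the strict lower bound $\sqrt d<C_d$, I would upgrade Passer's example. The lower bound $\sqrt d$ comes from $d$ anticommuting self-adjoint unitaries (a spin system), for which the commuting-dilation scaling is exactly $\sqrt d$ among self-adjoints. Replacing these by non-self-adjoint contractions yields a strictly larger constant. One option is to take $T_j$ to be complex combinations, or to adjoin to the spin system a $q$-commuting pair as in \cite{gerhold2022dilations}. I would show that the required scaling is at least $\sqrt{C_{d-2}^2+1.5438^2-1}$ or similar, by decomposing a test functional as a sum over two orthogonal blocks. Because $1.5438^2>2$, induction then gives a strict excess over $\sqrt{d}$.

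For (ii), iterating the recursion gives $C_d^2\le 2d-\sum_j \delta_j$. To obtain the $-\log d$ correction, I would use a doubling recursion $C_{2m}^2\le 2C_m^2-\delta_m$ with $\delta_m$ bounded below by a constant, which telescopes to a loss of order $\log_2 d$. The defect arises because the tensor combination of two blocks does not need the full $2\cdot$ norm: the same golden-ratio optimization applies at each merge, giving a fixed saving $c$ per level of a binary tree of depth $\log_2 d$. To produce the stated constant $1$ in front of $\log d$, the constants must be normalized, for instance through a merge saving of $\log 2$ in the squared norm. This is an optimization I would carry out in the appendix's style.

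The main obstacle is the merging step itself. One must show that two blockwise commuting normal dilations can be combined with a genuine defect $\delta>0$ that is \emph{uniform in the block size}. The naive tensor construction only gives $\sqrt{C_d^2+C_k^2}$ with no saving. My first attempt would be to rerun the proof of Theorem \ref{t:a} with the two contractions replaced by the block-normal dilations, treating each block as a single normal element of norm $C_d$ and invoking the pair case after rescaling.
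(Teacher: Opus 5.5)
\textbf{There is a genuine gap: the recursion behind your upper bounds is unproven, and in the form you state it, false.}

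With $d=k=1$, your inequality $C_{d+k}^2\le C_d^2+C_k^2-\delta_{d,k}$ gives $C_2^2<2C_1^2=2$. This contradicts $C_2\ge\sqrt2$, and also the strict bound $C_2>\sqrt2$ you are trying to prove. Even without the defect, iterating from $C_1=1$ would force $C_d\le\sqrt d$. The doubling form $C_{2m}^2\le 2C_m^2-\delta_m$ fails at $m=1$ for the same reason.

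The tensor mechanism does not produce a dilation of the given tuple. The operators $N\otimes 1$ and $1\otimes M$ compress to $(T\otimes 1,1\otimes T')$ on $\H\otimes\H$, a tuple whose two blocks commute with each other. The whole difficulty is the noncommutativity between blocks. The paper contains no quadrature principle, and $2/\sqrt{\phi}$ is not derived from $\sqrt{2d}$. The $k=2$ instance you use for (i) is not refuted, but nothing proves it. Treating a block as ``a single normal element'' and invoking the pair case destroys the commutation inside the block.

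\textbf{What the paper does for the upper bounds.} It runs the shift-and-twist construction with one distinguished coordinate against a whole $d$-block:
$Z_1(x\otimes\delta_m\otimes\eta)=Ux\otimes\delta_{m+1}\otimes\eta$ and $Z_j(x\otimes\delta_m\otimes\eta)=U^mY_jU^{-m}x\otimes\delta_m\otimes S\eta$. Here $(Y_j)$ are commuting unitaries dilating $\tau\mathfrak{u}_{a_j}$ with $\tau\approx C_d^{-1}$. It compresses with the vector $\xi$ of Theorem \ref{t:uni-comm-dil-constant} and averages over the $d+1$ choices of distinguished coordinate. This yields $C_{d+1}\le (d+1)/(A(p,r)+drC_d^{-1})$, which is not of quadrature type.

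Part (i) follows by taking $p=r=1-\frac{1}{2d}$, so that $A=1/\sqrt{2d}$, and inducting from $C_1=1$; the inductive step reduces to $(2d+1)^2>4d(d+1)$. Part (ii) comes from the asymptotics of $\gamma_{d+1}=\sup_r\frac{M(r)+dr\gamma_d}{d+1}$. Writing $\gamma_d^{-2}=2d+\eta_d$, one gets $\eta_{d+1}=\eta_d-\frac1d+(\text{summable})$. So the $-\log d$ is a harmonic sum, a saving of about $1/d$ per added coordinate, not a constant saving per level of a binary tree. Your ``$\log 2$ per merge'' is fitted to the answer rather than derived.

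\textbf{Your strict lower bound sketch also fails.} As written, $C_d^2\ge C_{d-2}^2+1.5438^2-1$ together with $C_{d-2}^2>d-2$ only gives $C_d^2>d-0.62$. More importantly, no available tool supports an induction through the universal constant $C_{d-2}$. The formula $c_\Theta=1/\inf\{\|\text{Re}X\|:X\in\conv(w(\Theta))\}$ of \cite[Theorem 4.4]{gerhold2021dilations} holds for the universal $\Theta$-commuting tuples, not for an arbitrary near-extremal tuple for $C_{d-2}$.

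The paper stays inside the $\Theta$ family. It builds a block-diagonal $\Theta$ from $\Theta_1,\dots,\Theta_k$ with all entries $\pi$ between blocks. Generators from distinct blocks then anticommute, together with their adjoints. Hence the real parts of block convex combinations $X_j$ anticommute, which gives $\|\sum_j\lambda_j\text{Re}X_j\|^2\le\sum_j\lambda_j^2\|\text{Re}X_j\|^2$. Choosing $\lambda_j\propto c_{\Theta_j}^2$ yields $C_d\ge c_\Theta\ge(\sum_jc_{\Theta_j}^2)^{1/2}$, with no ``$-1$''. Splitting $d$ into pairs, plus one triple when $d$ is odd, and using $c_{\theta_s}\approx1.5438>\sqrt2$ and $1.858>\sqrt3$ gives $C_d>\sqrt d$.
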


The best available lower bounds of $1.5438\lesssim C_2$ and $1.858\lesssim C_3$ arise from $q$--commuting unitaries (see \cite[Section 7]{gerhold2022dilations} and \cite[Corollary 4.7]{gerhold2021dilations}). Theorem \ref{t:uni-dil-lower} takes advantage of these two examples to return that, for every positive integer $d\geq 2$,
\[
    C_d \gtrsim 1.0916\sqrt{d} \qquad \text{or} \qquad C_d \gtrsim \sqrt{1.1916d - 0.1227}
\]
depending on when $d$ is even or odd, respectively. In both cases, this returns the same approximate lower bound of $1.0916\sqrt{d}$ for $d$ sufficiently large.

We now describe the organization of the manuscript. In Section \ref{s:prelim}, we record basic properties and notation. Within Section \ref{s:uni-comm-dil}, we prove our main results. In Subsection \ref{ss:pairs}, we prove Theorem \ref{t:a} and mention connections with the commuting dilation constant for free Haar unitaries. In Subsection \ref{ss:d-tuples}, we prove the recursive upper bound and the first assertion of Theorem \ref{t:b}. In Appendix \ref{s:optimization}, we offer supplementary results that describe the optimality of the proof strategy presented in Theorem \ref{t:a}. Finally, in Appendix \ref{s:recursion}, we prove some of the finer details required to prove the asymptotic upper bound given within the second statement of Theorem \ref{t:b}.

\section{Preliminaries}\label{s:prelim}

Let $A = (A_1,\ldots, A_d)\in B(\H)^d$ be a $d$-tuple of operators on a Hilbert space $\H$, and $B = (B_1,\ldots, B_d)\in B(\K)^d$ be a $d$-tuple of operators on a Hilbert space $\K$ with the property that $\K\supseteq\H$. Then, $A$ is said to be a \emph{compression} of $B$ if
\[
    P_\H B_i|_\H = A_i, \qquad i=1,\ldots, d,
\]
where $P_\H:\K\rightarrow\H$ denotes the orthogonal projection onto $\H$. In which case, we denote this by $A\prec B$. Alternatively, $B$ is also said to be a \emph{dilation} of $A$. One may consult \cite{paulsen2002completely} or \cite{shalit2021dilation} for detailed accounts on dilation theory.

Fix an integer $d\in\bN$. The \emph{universal commuting dilation constant}, denoted $C_d$, is the smallest constant $\alpha$ with the property that, for every $d$-tuple of contractions $A\in B(\H)^d$, there is a $d$-tuple of commuting normal operators $B\in B(\K)^d$ such that $A\prec \alpha B$. It is well-known that a $d$-tuple of contractions $A\in B(\H)^d$ dilates to a $d$-tuple of unitaries $U\in B(\H\oplus\H)^d$ where
\[
    U_i = \begin{bmatrix}
        A_i & (I-A_iA_i^*)^{1/2}\\
        (I-A_i^*A_i)^{1/2} & -A_i^*
    \end{bmatrix}, \qquad i=1,\ldots, d.
\]
From this, it can be shown that $C_d$ is the smallest constant $\alpha$ such that, for every $d$-tuple of unitaries $W\in B(\H)^d$, there is a commuting $d$-tuple of unitaries $U\in B(\H)^d$ such that $W\prec \alpha U$ \cite[Remark 1.2]{gerhold2021dilations}.

There have been several attempts at estimating the universal commuting dilation constants. It was first shown that $C_d \leq d$ for each integer $d$ (see \cite[Section 7]{davidson2017dilations} or \cite{helton2019dilations}). It was then proven that $C_d\leq \max\{ d, 2\sqrt{d}\}$ \cite[Corollary 6.11]{passer2018minimal} and, subsequently, that $C_d\leq \sqrt{2d}$ \cite[Theorem 4.4]{passer2019shape}. One also has a corresponding lower bound given by $C_d\geq \sqrt{d}$ \cite{passer2018minimal}. At the time of writing, the strongest bounds for arbitrary $d$ are given by $\sqrt{d}\leq C_d\leq \sqrt{2d}$. Further refinements on the lower bound for $C_d$ were shown when $d=2$ \cite[Section 7]{gerhold2022dilations} or $d=3$ \cite[Corollary 4.7]{gerhold2021dilations}.

We also consider a corresponding operator algebraic formulation on the value of this constant. For this, we introduce some standard facts as well as terminology. An \emph{operator system} is a self-adjoint unital subspace $\S\subset B(\H)$ where $\H$ is some Hilbert space. A linear map between operator systems $\Phi:\S\rightarrow\T$ is said to be \emph{positive} if $\Phi(s)\geq 0$ whenever $s\geq 0$. Further, $\Phi:\S\rightarrow\T$ is \emph{completely positive} if
\[
    \Phi^{(r)}:\bM_r(\S)\longrightarrow\bM_r(\T), \qquad [s_{ij}]\longmapsto [\Phi(s_{ij})],
\]
is positive for each integer $r\geq 1$. By Stinespring's dilation theorem \cite{stinespring1955positive}, if $\B$ is a unital $\rC^*$-algebra and $\Phi:\B\rightarrow B(\H)$ is a unital completely positive map, then there is a Hilbert space $\K\supseteq\H$ and a unital $*$-representation $\pi:\B\rightarrow B(\K)$ such that $\Phi = P_\H \pi(\cdot)|_\H$. Conversely, if $\pi:\B\rightarrow B(\K)$ is a unital $*$-representation and $\H\subseteq\K$ is a Hilbert space, then $\Phi:=P_\H \pi(\cdot)|_\H$ is a unital completely positive map.

Recall that the $\rC^*$-algebra $\rC(\bT^d)$ of continuous functions on the torus $\bT^d\subseteq\bC^d$ is the universal unital $\rC^*$-algebra generated by $d$ commuting unitaries, which are the coordinate functions $z_1,\ldots, z_d$. That is, whenever $U\in B(\H)^d$ is a $d$-tuple of commuting unitary operators on some Hilbert space $\H$, there is a unital $*$-homomorphism $\pi:\rC(\bT^d)\rightarrow \rC^*(U_1,\ldots, U_d)$ such that $\pi(z_i) = U_i$ for each $i=1,\ldots, d$. Let $\bF_d$ denote the free group on $d$ generators $a_1,\ldots, a_d$. Then, the (full) free group $\rC^*$-algebra $\rC^*(\bF_d)$ is the universal $\rC^*$-algebra generated by $d$ unitary operators. For $w\in\bF_d$, we let $\mathfrak{u}_w$ denote the corresponding unitary in $\rC^*(\bF_d)$. By Stinespring's dilation theorem, the inverse $C_d^{-1}$ of the universal commuting dilation constant is the largest constant $\alpha$ for which there is a unital completely positive map $\Phi:\rC(\bT^d) \rightarrow\rC^*(\bF_d)$ such that
\[
    \Phi(z_i) = \alpha \mathfrak{u}_{a_i}, \qquad i=1,\ldots, d.
\]
One may also regard $\rC(\bT^d)$ as the full group $\rC^*$-algebra $\rC^*(\bZ^d)$ of the free abelian group on $d$ generators \cite[Example 2.5.1]{brown2008textrm}. We let $e_1,\ldots, e_d\in\bZ^d$ denote the standard generators of $\bZ^d$. In this context, the universal commuting dilation constant admits another formulation:~ $C_d^{-1}$ is the largest constant $\alpha$ so that, for every $d$-tuple of unitary operators $U\in B(\H)^d$, there is a positive definite function $\Phi:\bZ^d \rightarrow B(\H)$ such that $\Phi(e_i) = \alpha U_i$ for every $i=1,\ldots, d$.

\section{The universal commuting dilation constant}\label{s:uni-comm-dil}

\subsection{Pairs of operators}\label{ss:pairs}

This subsection is devoted to constructing a unital completely positive map $\Phi:\rC(\bT^2)\rightarrow\rC^*(\bF_2)$ such that the coordinate functions $z,w\in\rC(\bT^2)$ are mapped to
\[
\Phi(z) = \frac{\sqrt{\phi}}{2}\mathfrak{u}_a, \qquad \Phi(w) = \frac{\sqrt{\phi}}{2}\mathfrak{u}_b
\]
where $\phi$ denotes the golden ratio and $a,b\in\bF_2$ are the canonical generators (Theorem \ref{t:uni-comm-dil-constant}). From here, we will readily have that $C_2\leq \frac{2}{\sqrt{\phi}}$. To accomplish this, we construct a pair of commuting unitary operators $Z$ and $W$, and then $\Phi$ will arise out of a particular compression of the pair.

\begin{proposition}\label{p:cocyc-uni-comm-uni}
Let $\K$ be a Hilbert space and $S\in B(\K)$ be a unitary operator. Assume that $\rC^*(\bF_2)\subseteq B(\H)$ for some Hilbert space $\H$ and consider the pair of operators $Z,W\in B(\H\otimes \ell^2(\bZ)\otimes\K)$ defined by
    \[
        Z(h\otimes\delta_m\otimes\eta) = \mathfrak{u}_ah\otimes\delta_{m+1}\otimes\eta, \qquad W(h\otimes\delta_m\otimes\eta) = \mathfrak{u}_{a^m ba^{-m}}h\otimes\delta_m\otimes S\eta,
    \]
for $h\in\H$, $m\in\bZ$, and $\eta\in\K$. Then, $Z$ and $W$ are commuting unitary operators.
\end{proposition}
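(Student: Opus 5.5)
We verify the claim directly on elementary tensors $h\otimes\delta_m\otimes\eta$, whose span is dense in $\H\otimes\ell^2(\bZ)\otimes\K$. Since all operators involved are bounded, it suffices to check the relevant identities there.

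\emph{Unitarity.} First we check that $Z$ and $W$ extend to unitaries. Decompose $\H\otimes\ell^2(\bZ)\otimes\K=\bigoplus_{m\in\bZ}\H\otimes\delta_m\otimes\K$.

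$Z$ acts as $\mathfrak{u}_a\otimes B\otimes I$, where $B$ is the bilateral shift $\delta_m\mapsto\delta_{m+1}$. This is a tensor product of unitaries, hence unitary.

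$W$ acts block-diagonally as $\bigoplus_m \mathfrak{u}_{a^mba^{-m}}\otimes S$ on the summands $\H\otimes\delta_m\otimes\K$. Each block is a unitary, since $\mathfrak{u}_{a^mba^{-m}}=\mathfrak{u}_a^m\mathfrak{u}_b\mathfrak{u}_a^{-m}$ is unitary in $\rC^*(\bF_2)$. The blocks are uniformly bounded, so $W$ is a well-defined unitary. Its inverse is given by
\[
W^{-1}(h\otimes\delta_m\otimes\eta)=\mathfrak{u}_{a^mb^{-1}a^{-m}}h\otimes\delta_m\otimes S^{*}\eta .
\]

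\emph{Commutation.} For the first composition,
\[
ZW(h\otimes\delta_m\otimes\eta)=\mathfrak{u}_a\mathfrak{u}_{a^mba^{-m}}h\otimes\delta_{m+1}\otimes S\eta=\mathfrak{u}_{a^{m+1}ba^{-m}}h\otimes\delta_{m+1}\otimes S\eta .
\]
For the other order,
\[
WZ(h\otimes\delta_m\otimes\eta)=W(\mathfrak{u}_ah\otimes\delta_{m+1}\otimes\eta)=\mathfrak{u}_{a^{m+1}ba^{-(m+1)}}\mathfrak{u}_ah\otimes\delta_{m+1}\otimes S\eta .
\]
Here $W$ uses the conjugating index $m+1$ because $Z$ has already moved the $\ell^2(\bZ)$-coordinate to $m+1$. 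Since $w\mapsto\mathfrak{u}_w$ is a group homomorphism, this equals $\mathfrak{u}_{a^{m+1}ba^{-m}}h\otimes\delta_{m+1}\otimes S\eta$. The two expressions agree, so $ZW=WZ$ on a total set and hence everywhere.

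There is no genuine obstacle in this argument. The only point requiring care is the index bookkeeping in the last step, namely that the shift in the middle tensor factor exactly compensates the extra conjugation by $a$. This is the cocycle identity $a\cdot(a^mba^{-m})=(a^{m+1}ba^{-(m+1)})\cdot a$.

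Note that the unitary $S$ on $\K$ plays no role in commutation, since $Z$ acts trivially on $\K$. It is presumably there for later compression arguments.
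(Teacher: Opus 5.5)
Your proof is correct and is essentially the paper's argument. The paper likewise computes $ZW$ and $WZ$ on elementary tensors $h\otimes\delta_m\otimes\eta$ and compares $\mathfrak{u}_a\mathfrak{u}_{a^mba^{-m}}$ with $\mathfrak{u}_{a^{m+1}ba^{-(m+1)}}\mathfrak{u}_a$; the paper simply calls unitarity clear, whereas you justify it via $Z=\mathfrak{u}_a\otimes B\otimes I$ and $W=\bigoplus_m \mathfrak{u}_{a^mba^{-m}}\otimes S$.
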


\begin{proof}
    It is clear that both $Z$ and $W$ are unitary operators. Now, observe that
    \[
        ZW(h\otimes\delta_m\otimes\eta) = Z(\mathfrak{u}_{a^m ba^{-m}}h\otimes\delta_m\otimes S\eta) = \mathfrak{u}_{a^{m+1}ba^{-m}}h \otimes \delta_{m+1}\otimes S\eta
    \]
    and, likewise, one finds that
    \[
        WZ(h\otimes\delta_m\otimes\eta) = W(\mathfrak{u}_a  h \otimes\delta_{m+1}\otimes \eta) = \mathfrak{u}_{(a^{m+1}ba^{-(m+1)})a}h \otimes \delta_{m+1}\otimes S\eta.
    \]
    So, indeed, $Z$ and $W$ are commuting unitaries.
\end{proof}

We remark that a similar shifting-and-twisting dilation construction appeared in \cite[Lemma 3.1]{gerhold2024dilation}. The author thanks Orr Shalit for bringing this to our attention.

For such pairs, we construct unital completely positive maps into the free group $\rC^*$-algebra by considering compressions of the following form.

\begin{proposition}\label{p:univ-uni-compress}
    Let $\K$ be a Hilbert space and $S\in B(\K)$ be some unitary operator. Assume that $\rC^*(\bF_2)\subseteq B(\H)$ for some Hilbert space $\H$ and consider the pair of commuting unitaries $Z,W\in B(\H\otimes \ell^2(\bZ)\otimes\K)$ defined by
    \[
        Z(h\otimes\delta_m\otimes\eta) = \mathfrak{u}_ah\otimes\delta_{m+1}\otimes\eta, \qquad W(h\otimes\delta_m\otimes\eta) = \mathfrak{u}_{a^m ba^{-m}}h\otimes\delta_m\otimes S\eta,
    \]
    for $h\in\H, m\in\bZ$, and $\eta\in K$. Assume that $\xi\in\ell^2(\bZ)\otimes\K$ is a unit vector given by $\xi=\sum_{m\in\bZ}\delta_m\otimes\xi_m$ for some sequence $(\xi_m)_m$ in $\K$, and define an isometry $V:\H\rightarrow \H\otimes \ell^2(\bZ)\otimes\K$ by $Vh=h\otimes\xi$. Then, the following statements hold.
    \begin{enumerate}[{\rm (i)}]
        \item We have that
        \[
            V^*ZV = \left(\sum_{m\in\bZ}\langle \xi_m,\xi_{m+1}\rangle \right)\mathfrak{u}_a \quad \text{and} \quad V^*WV = \sum_{m\in\bZ} \langle S\xi_m,\xi_m\rangle\mathfrak{u}_{a^m b a^{-m}}.
        \]
        \item We have that $V^* \rC^*(Z,W) V \subseteq \rC^*(\bF_2)$.
    \end{enumerate}
\end{proposition}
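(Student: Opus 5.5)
The plan is to compute the compressions directly on elementary tensors and then pass to the whole $\rC^*$-algebra by a density argument. Throughout, write $V^*$ via the formula $V^*(h\otimes\zeta) = \langle \zeta,\xi\rangle h$ for $h\in\H$ and $\zeta\in\ell^2(\bZ)\otimes\K$. This formula is verified by pairing against $Vk = k\otimes\xi$.

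For (i), I will apply $Z$ to $Vh = \sum_m h\otimes\delta_m\otimes\xi_m$, which is a norm-convergent sum since $\sum_m\|\xi_m\|^2=1$. This gives
\[
ZVh=\sum_m \mathfrak{u}_a h\otimes\delta_{m+1}\otimes\xi_m = \mathfrak{u}_ah\otimes\zeta, \qquad \zeta=\sum_m\delta_{m+1}\otimes\xi_m .
\]
Hence $V^*ZVh=\langle\zeta,\xi\rangle\,\mathfrak{u}_ah$. Expanding the inner product, $\langle\zeta,\xi\rangle=\sum_m\langle\xi_m,\xi_{m+1}\rangle$, and this series converges absolutely by Cauchy--Schwarz.

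For $W$, the term with index $m$ maps to $\mathfrak{u}_{a^mba^{-m}}h\otimes\delta_m\otimes S\xi_m$. Applying $V^*$ to this term yields $\langle \delta_m\otimes S\xi_m,\xi\rangle\,\mathfrak{u}_{a^mba^{-m}}h=\langle S\xi_m,\xi_m\rangle\,\mathfrak{u}_{a^mba^{-m}}h$. Here I use that $V^*$ is bounded, so it commutes with the convergent sum. The resulting series $\sum_m \langle S\xi_m,\xi_m\rangle\mathfrak{u}_{a^mba^{-m}}$ converges absolutely in norm, because $\sum_m\|\xi_m\|^2<\infty$. This proves (i).

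For (ii), I would run the same computation on an arbitrary word in $Z,W,Z^*,W^*$. By Proposition \ref{p:cocyc-uni-comm-uni}, every element of the $*$-algebra generated by $Z$ and $W$ is a linear combination of $Z^kW^l$ with $k,l\in\bZ$. A direct induction shows
\[
Z^kW^l(h\otimes\delta_m\otimes\eta)=\mathfrak{u}_{a^{k}a^mb^la^{-m}}h\otimes\delta_{m+k}\otimes S^l\eta .
\]
Consequently
\[
V^*Z^kW^lV=\sum_m\langle S^l\xi_m,\xi_{m+k}\rangle\,\mathfrak{u}_{a^{m+k}b^la^{-m}} .
\]
This series again converges absolutely in norm, since $\sum_m|\langle S^l\xi_m,\xi_{m+k}\rangle|\le 1$ by Cauchy--Schwarz. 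It therefore lies in $\rC^*(\bF_2)$, which is norm closed in $B(\H)$.

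By linearity, $V^*$ applied to the $*$-algebra generated by $Z,W$ and then $V$ lands in $\rC^*(\bF_2)$. The map $x\mapsto V^*xV$ is norm continuous, so it carries the norm closure $\rC^*(Z,W)$ into the closed set $\rC^*(\bF_2)$, completing (ii).

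The only mild obstacle is justifying the interchange of $V^*$ with the infinite sums and ensuring norm convergence in $\rC^*(\bF_2)$. The absolute-convergence estimates above, coming from $\|\xi\|=1$, handle both points.
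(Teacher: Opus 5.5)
Your proposal is correct and follows the paper's proof essentially verbatim. You compute $V^*ZV$ and $V^*WV$ by applying the operators to $Vh=\sum_m h\otimes\delta_m\otimes\xi_m$, and for (ii) you derive the same formula $V^*Z^kW^lV=\sum_m\langle S^l\xi_m,\xi_{m+k}\rangle\mathfrak{u}_{a^{m+k}b^la^{-m}}$ as a norm-convergent series in $\rC^*(\bF_2)$. The extra details you supply are exactly what the paper leaves implicit when it says the series is ``easily seen to converge'' and that the inclusion ``follows'': the Cauchy--Schwarz bound $\sum_m\|\xi_m\|\|\xi_{m+k}\|\le 1$, the reduction of the $*$-algebra to the span of the $Z^kW^l$, and the continuity/closure step.
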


\begin{proof}
    (i): Fix $h\in\H$. First, note that
    \[
        V h = h\otimes\xi = \sum_{m\in\bZ} h\otimes\delta_m\otimes\xi_m,
    \]
    and so
    \[
        V^*ZVh = V^*\left(\sum_{m\in\bZ}\mathfrak{u}_ah\otimes\delta_{m+1}\otimes\xi_m\right) = \left(\sum_{m\in\bZ} \langle \xi_m,\xi_{m+1}\rangle\,\right) \mathfrak{u}_ah.
    \]
    Therefore
    \[
        V^*ZV = \left( \sum_{m\in\bZ} \langle \xi_m,\xi_{m+1}\rangle \right)\mathfrak{u}_a.
    \]
    Similarly, we see that
    \[
        V^*WV h = V^*\left(\sum_{m\in\bZ} \mathfrak{u}_{a^m b a^{-m}}h\otimes\delta_m\otimes S\xi_m\right) = \sum_{m\in\bZ} \langle S\xi_m,\xi_m\rangle\, \mathfrak{u}_{a^m b a^{-m}}h.
    \]
    Thus, we obtain
    \[
        V^*WV = \sum_{m\in\bZ} \langle S\xi_m,\xi_m\rangle \mathfrak{u}_{a^m b a^{-m}}.
    \]

    (ii): Fix $r,s\in\bZ$. Then, we have that
    \begin{align*}
        V^* Z^r W^s Vh & = V^*\left( \sum_{m\in\bZ} \mathfrak{u}_{a^r a^mb^sa^{-m}}h \otimes \delta_{m+r}\otimes(S^s\xi_m)\right)\\
        & = \sum_{m\in\bZ} \langle S^s \xi_m, \xi_{m+r}\rangle \mathfrak{u}_{a^r(a^mb^sa^{-m})}h.
    \end{align*}
    Therefore,
    \[
        V^* Z^rW^s V = \sum_{m\in\bZ} \langle S^s\xi_m, \xi_{m+r}\rangle \mathfrak{u}_{a^{r+m}b^sa^{-m}},
    \]
    and the series is easily seen to converge in the norm topology. Thus, we have that $V^* Z^rW^s V\in\rC^*(\bF_2)$ and, from which, it follows that $V^*\rC^*(Z,W)V\subseteq\rC^*(\bF_2)$.
\end{proof}

By Propositions \ref{p:cocyc-uni-comm-uni} and \ref{p:univ-uni-compress}, we may now derive an upper bound on the commuting dilation constant for a pair of free unitaries. Within Appendix \ref{s:optimization}, we justify the strength behind our choice of parameters.

\begin{theorem}\label{t:uni-comm-dil-constant}
    There is a unital completely positive map $\Phi:\rC(\bT^2)\rightarrow\rC^*(\bF_2)$ such that $\Phi(z) = \frac{\sqrt{\phi}}{2}\mathfrak{u}_a$ and $\Phi(w) = \frac{\sqrt{\phi}}{2}\mathfrak{u}_b$.
\end{theorem}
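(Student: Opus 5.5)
The plan is to apply Propositions \ref{p:cocyc-uni-comm-uni} and \ref{p:univ-uni-compress} with a carefully chosen finite-dimensional $\K$, unitary $S$, and unit vector $\xi=\sum_m\delta_m\otimes\xi_m$. The aim is to make $V^*WV$ a multiple of $\mathfrak{u}_b$ alone. By Proposition \ref{p:univ-uni-compress}(i), this requires $\langle S\xi_m,\xi_m\rangle=0$ for every $m\neq 0$. At the same time we want both $\beta:=\langle S\xi_0,\xi_0\rangle$ and $\alpha:=\sum_m\langle\xi_m,\xi_{m+1}\rangle$ as large as possible.

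\textbf{Construction of $S$ and $\xi$.} I would take $\K=\bC^2$ and $S=\mathrm{diag}(1,-1)$. Put $\xi_0=(t,0)$, so that $\beta=t^2$. For $m\neq0$ put $\xi_m=\frac{s_m}{\sqrt2}(1,1)$, which is killed by $\langle S\cdot,\cdot\rangle$. We take $s_{-m}=s_m\ge0$ with geometric decay $s_m=s\,r^{m-1}$ for $m\ge1$. Then
\[
\|\xi\|^2=t^2+\frac{2s^2}{1-r^2},\qquad \alpha=\sqrt2\,ts+\frac{2s^2r}{1-r^2}.
\]
The next step is to maximise $\min\{\alpha,\beta\}$ subject to $\|\xi\|=1$, over $t,s,r$ (and possibly over more general $S$ and sequences). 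The hope is that this optimisation yields exactly $\alpha=\beta=\frac{\sqrt{\phi}}{2}$, with the golden ratio emerging from the quadratic relation among $r$, $s$ and $t$ at the optimum. This optimisation is the step I expect to be the main obstacle. The tail sequence is essentially a Toeplitz/Rayleigh-quotient problem, and the free parameter $r$ must be tuned against the mass $t^2$ at the origin.

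If the optimum only gives $\alpha,\beta\ge\frac{\sqrt\phi}{2}$ with one of them strictly larger, I would equalise them afterwards. One composes with the Schur multiplier on $\rC^*(\bZ^2)=\rC(\bT^2)$ associated with the positive definite function $(p,q)\mapsto\lambda^{|p|}\mu^{|q|}$, where $0\le\lambda,\mu\le1$. This is a product of Poisson kernels, so the multiplier is unital completely positive. It scales $z$ and $w$ independently by $\lambda$ and $\mu$.

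\textbf{Assembling $\Phi$.} Let $\pi:\rC(\bT^2)\to\rC^*(Z,W)$ be the unital $*$-homomorphism with $\pi(z)=Z$ and $\pi(w)=W$. It exists by universality, since $Z$ and $W$ are commuting unitaries by Proposition \ref{p:cocyc-uni-comm-uni}. Define $\Phi=V^*\pi(\cdot)V$, precomposed with the scaling multiplier if needed. This map is unital completely positive because $V$ is an isometry. Its range lies in $\rC^*(\bF_2)$ by Proposition \ref{p:univ-uni-compress}(ii). By part (i) of the same proposition and the choice of $\xi$, we get $\Phi(z)=\alpha\,\mathfrak{u}_a$ and $\Phi(w)=\beta\,\mathfrak{u}_b$ with $\alpha=\beta=\frac{\sqrt\phi}{2}$, as required.
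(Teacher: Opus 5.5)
Your assembly step is fine and matches the paper. The core of the plan fails, though: no single compression of the kind you describe gives $\alpha=\beta=\frac{\sqrt{\phi}}{2}$. Your Poisson-kernel multiplier cannot repair this, because it only shrinks coefficients and so cannot raise the smaller of $\alpha,\beta$.

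In your family, $\langle\xi_0,\frac{e_1+e_2}{\sqrt2}\rangle=t/\sqrt2$ and each tail carries mass $q=\frac{1-t^2}{2}$. Cauchy--Schwarz against the shift (as in Proposition \ref{p:optimize-2}) then gives $\alpha\le 2\sqrt{q(\frac{t^2}{2}+q)}=\sqrt{1-t^2}$, while $\beta=t^2$. Hence $\min\{\alpha,\beta\}\le\max_t\min\{\sqrt{1-t^2},t^2\}=1/\phi\approx 0.618$, attained at $t^2=1/\phi$. The golden ratio does emerge, but it only yields $C_2\le\phi$.

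Now allow a general $\xi_0$ with $\|\xi_0\|^2=p$ and $\langle S\xi_0,\xi_0\rangle=r\le p$ (here $r$ is not your decay ratio). Propositions \ref{p:optimize-1} and \ref{p:optimize-2} bound $\alpha$ by $\sqrt{(1-p)(1+\sqrt{p^2-r^2})}$. The quantity $\min\{r,\sqrt{(1-p)(1+\sqrt{p^2-r^2})}\}$ then peaks at about $0.634$, still strictly below $\frac{\sqrt{\phi}}{2}\approx 0.636$. So, within the class analysed in Appendix \ref{s:optimization}, the optimisation you flag as the main obstacle has no solution at the target value.

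The missing idea is symmetrisation, which replaces $\min\{\alpha,\beta\}$ by $\frac{\alpha+\beta}{2}$. The paper first builds a deliberately unbalanced map $\Phi_1$ from $\xi_0=\sqrt{\frac{p+r}{2}}e_1+\sqrt{\frac{p-r}{2}}e_2$ with $p=\frac{5+\sqrt5}{10}$ and $r=\sqrt{\frac{2(\sqrt5-1)}{5}}$. This gives $\Phi_1(w)=r\,\mathfrak{u}_b$ with $r\approx 0.703$ and $\Phi_1(z)=\widetilde{r}\,\mathfrak{u}_a$ with $\widetilde{r}\approx 0.569$.

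It then averages $\Phi_1$ with $\theta\circ\Phi_1\circ\sigma$. Here $\sigma$ swaps $z,w$ in $\rC(\bT^2)$, and $\theta$ is the automorphism of $\rC^*(\bF_2)$ swapping $\mathfrak{u}_a,\mathfrak{u}_b$. The average is still unital completely positive into $\rC^*(\bF_2)$. It sends $z\mapsto\frac{r+\widetilde{r}}{2}\mathfrak{u}_a$ and $w\mapsto\frac{r+\widetilde{r}}{2}\mathfrak{u}_b$, and $\frac{r+\widetilde{r}}{2}=\frac{\sqrt{\phi}}{2}$ is exactly the maximum found in Proposition \ref{p:optimize-3}.

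Both changes are needed. Even with averaging, your restriction $\xi_0=(t,0)$ only gives $\max_t\frac{t^2+\sqrt{1-t^2}}{2}=\frac58$.
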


\begin{proof}
    Throughout, assume that $\rC^*(\bF_2)\subseteq B(\H)$ for some Hilbert space $\H$. Let $e_1, e_2$ be the standard orthonormal basis of $\bC^2$ and consider the unitary operator $S\in \bM_2$ defined by $Se_1=e_1$ and $Se_2=-e_2$.
    
    First, we construct an appropriate unit vector $\xi\in\ell^2(\bZ)\otimes\bC^2$ of the form $\sum_{m\in\bZ} \delta_m\otimes\xi_m$ for which to apply Proposition \ref{p:univ-uni-compress}. To this end, we consider the following constants
    \[
        p=\frac{5+\sqrt5}{10} \quad \text{and} \quad r=\sqrt{\frac{2(\sqrt5-1)}{5}},
    \]
    and note that $0<r<p<1$. Now, set
    \[
        \xi_0 = \sqrt{\frac{p+r}{2}}e_1 + \sqrt{\frac{p-r}{2}}e_2,
    \]
    and observe that $\|\xi_0\|^2 = p$. Furthermore, define
    \[
        q=\frac{1-p}{2}, \quad \rho=\sqrt{\frac{q}{\frac{1}{2}(p+\sqrt{p^2-r^2})+q}}, \quad \text{and} \quad \alpha_m = \sqrt{q(1-\rho^2)}\,\rho^{m-1}, \ \ \ m\geq 1.
    \]
    Then, we consider
    \[
        \xi=\sum_{m\in\bZ}\delta_m\otimes \xi_m\in \ell^2(\bZ)\otimes\bC^2
    \]
    where $\xi_m = \alpha_{|m|}\frac{e_1+e_2}{\sqrt2}$ for each $m\neq 0$. Note that $\xi$ is a unit vector since
    \[
        \|\xi\|^2 = \|\xi_0\|^2 +2\sum_{m\geq1}\alpha_m^2 = p + 2q(1-\rho^2)\sum_{m\geq1}\rho^{2(m-1)} = p+2q = 1.
    \]

    By Proposition \ref{p:cocyc-uni-comm-uni}, the pair of operators on $\H\otimes\ell^2(\bZ)\otimes\bC^2$ defined by
    \[
        Z(h\otimes\delta_m\otimes\eta) = \mathfrak{u}_ah\otimes\delta_{m+1}\otimes\eta \quad \text{and} \quad W(h\otimes\delta_m\otimes\eta) = \mathfrak{u}_{a^mba^{-m}}h\otimes\delta_m\otimes S\eta
    \]
    are commuting unitaries. Thus, there is a unital $*$-representation $\pi:\rC(\bT^2)\rightarrow B(\H\otimes\ell^2(\bZ)\otimes\bC^2)$ such that $\pi(z) = Z$ and $\pi(w) = W$. Let $V:\H\rightarrow\H\otimes\ell^2(\bZ)\otimes\bC^2$ be the isometry given by $Vh = h\otimes\xi$ and define a unital completely positive map
    \[
        \Phi_1:\rC(\bT^2)\longrightarrow B(\H), \qquad f\longmapsto V^*\pi(f)V.
    \]
    By Proposition \ref{p:univ-uni-compress}, we have that $\Phi_1(\rC(\bT^2))\subseteq\rC^*(\bF_2)$ and that
    \[
        \Phi_1(z) = \left( \sum_{m\in\mathbb Z} \langle \xi_m,\xi_{m+1}\rangle\right)\mathfrak{u}_a \quad \text{and} \quad \Phi_1(w) = \sum_{m\in\mathbb Z}\langle S\xi_m,\xi_m\rangle\mathfrak{u}_{a^m b a^{-m}}.
    \]
    For each $m\neq 0$, note that $\langle S\xi_m,\xi_m\rangle = 0$ and so,
    \[
        \Phi_1(w)= \langle S\xi_0,\xi_0\rangle\mathfrak{u}_b =r\mathfrak{u}_b.
    \]
    Setting $A = \langle \xi_0,\frac{e_1+e_2}{\sqrt{2}}\rangle$, we then find that
    \begin{align*}
        \sum_{m\in\bZ}\langle\xi_m, \xi_{m+1}\rangle & =2A\alpha_1  +  2\sum_{m\geq 1}\langle \xi_m, \xi_{m+1}\rangle\\
        & = 2A\alpha_1 + 2 \sum_{m\geq 1}\alpha_m \alpha_{m+1}\\
        & = 2A\sqrt{\frac{A^2q}{A^2+q}} +2q \sqrt{\frac{q}{A^2+q}}\\
        & = 2\sqrt{q\left(A^2 + q \right)} = \sqrt{\frac{1+\sqrt{5}}{10}}.
    \end{align*}
    Therefore, $\Phi_1(z)=\widetilde{r} \mathfrak{u}_a$ where $\widetilde{r}:=\sqrt{\frac{1+\sqrt{5}}{10}}$.
    
    Let $\beta$ be the $*$-automorphism of $\rC^*(\bF_2)$ induced by the group automorphism of $\bF_2$ that exchanges $a$ and $b$. Similarly, let $\sigma$ be the $*$-automorphism of $\rC(\bT^2)$ that exchanges the coordinate functions $z$ and $w$. Define a unital completely positive $\Phi_2:\rC(\bT^2)\rightarrow\rC^*(\bF_2)$ by $\Phi_2 =\beta\circ \Phi_1\circ \sigma$. Furthermore, define a unital completely positive map $\Phi:\rC(\bT^2)\rightarrow\rC^*(\bF_2)$ by $\Phi=\frac{\Phi_1+\Phi_2}{2}$. Then, $\Phi$ satisfies
    \[
        \Phi(z)=\frac{r+\widetilde{r}}{2}\mathfrak{u}_a \quad \text{and} \quad \Phi(w)=\frac{r+\widetilde{r}}{2}\mathfrak{u}_b.
    \]
    Finally, observe that
    \[
        \frac{r+\widetilde{r}}{2} = \frac{1}{2}\left(\sqrt{\frac{2(\sqrt{5}-1)}{5}} + \sqrt{\frac{1+\sqrt{5}}{10}} \right) = \frac{1}{4}\sqrt{2(1+\sqrt{5})} = \frac{\sqrt{\phi}}{2}.
    \]
\end{proof}

As a direct consequence, we obtain the corresponding dilation theorem.

\begin{corollary}\label{c:uni-comm-dilation}
    The universal commuting dilation constant is bounded above by $\frac{2}{\sqrt{\phi}}$. In other words, given contractions $T_1, T_2\in B(\H)$, there is a Hilbert space $\K\supseteq \H$ and a pair of commuting normal contractions $N_1,N_2\in B(\K)$ such that
    \[
        T_i =  P_\H \left(\left.\frac{2}{\sqrt{\phi}}N_i\right)\right|_\H, \quad i=1,2.
    \]
\end{corollary}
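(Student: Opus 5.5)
The plan is to deduce the corollary from Theorem \ref{t:uni-comm-dil-constant} in two steps. First reduce from contractions to unitaries. Then use the universal property of $\rC^*(\bF_2)$ together with Stinespring's theorem.

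Given contractions $T_1,T_2\in B(\H)$, first dilate each to the unitary
\[
U_i=\begin{bmatrix} T_i & (I-T_iT_i^*)^{1/2}\\ (I-T_i^*T_i)^{1/2} & -T_i^*\end{bmatrix}\in B(\H\oplus\H),
\]
as recalled in Section \ref{s:prelim}. Since compressions compose, it suffices to find commuting normal contractions $N_1,N_2$ on some $\K\supseteq\H\oplus\H$ with $U_i=P_{\H\oplus\H}\bigl(\tfrac{2}{\sqrt{\phi}}N_i\bigr)|_{\H\oplus\H}$. Compressing further to $\H$ then gives the statement for $T_i$.

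By the universal property of the full group $\rC^*$-algebra, there is a unital $*$-homomorphism $\rho:\rC^*(\bF_2)\to B(\H\oplus\H)$ with $\rho(\mathfrak{u}_a)=U_1$ and $\rho(\mathfrak{u}_b)=U_2$. Let $\Phi$ be the unital completely positive map from Theorem \ref{t:uni-comm-dil-constant}. The composition $\rho\circ\Phi:\rC(\bT^2)\to B(\H\oplus\H)$ is unital completely positive and sends $z\mapsto\frac{\sqrt\phi}{2}U_1$ and $w\mapsto\frac{\sqrt\phi}{2}U_2$. Stinespring's theorem then yields $\K\supseteq\H\oplus\H$ and a unital $*$-representation $\pi$ of $\rC(\bT^2)$ on $\K$ with $\rho\circ\Phi=P_{\H\oplus\H}\pi(\cdot)|_{\H\oplus\H}$.

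Set $N_1=\pi(z)$ and $N_2=\pi(w)$. These are images of commuting unitaries under a $*$-homomorphism, so they are commuting unitaries, in particular commuting normal contractions. Moreover $P_{\H\oplus\H}N_i|_{\H\oplus\H}=\frac{\sqrt\phi}{2}U_i$. Multiplying by $\frac{2}{\sqrt\phi}$ and compressing to $\H$ gives $T_i=P_\H\bigl(\tfrac{2}{\sqrt\phi}N_i\bigr)|_\H$. This shows $C_2\le 2/\sqrt\phi$.

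There is no real obstacle. The only points needing care are that Stinespring's theorem may be applied to the composite $\rho\circ\Phi$, which is legitimate since it is a unital completely positive map on a unital $\rC^*$-algebra, and that the dilating space $\K$ genuinely contains $\H$.
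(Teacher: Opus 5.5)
Your proof is correct and takes the paper's approach: apply Stinespring's theorem to the unital completely positive map $\Phi$ from Theorem \ref{t:uni-comm-dil-constant}. The only difference is that you carry out the reduction explicitly (the unitary dilation of $T_1,T_2$, then composing with the representation $\rho$ of $\rC^*(\bF_2)$ before applying Stinespring), whereas the paper cites Remark 1.2 of Gerhold--Pandey--Shalit--Solel for it.
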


\begin{proof}
    By Theorem \ref{t:uni-comm-dil-constant}, there is a unital completely positive map $\Phi:\rC(\bT^2)\rightarrow\rC^*(\bF_2)$ such that
    \[
        \Phi(z) = \frac{\sqrt{\phi}}{2} \mathfrak{u}_a \quad \text{and} \quad \Phi(w) = \frac{\sqrt{\phi}}{2}\mathfrak{u}_b.
    \]
    By Stinespring's dilation theorem \cite{stinespring1955positive}, we see that $(\mathfrak{u}_a,\mathfrak{u}_b)\prec \frac{2}{\sqrt{\phi}}(z,w)$. By \cite[Remark 1.2]{gerhold2021dilations}, we can therefore conclude that $C_2 \leq \frac{2}{\sqrt{\varphi}}$, and the conclusion follows.
\end{proof}

Corollary \ref{c:uni-comm-dilation} also improves upon the current upper bound on the commuting dilation constant for a pair of free Haar unitaries  \cites{gerhold2021dilations, gerhold2025empirical}. For this, let $\lambda:\bF_d\rightarrow B(\ell^2(\bF_d))$ denote the left regular representation of $\bF_d$ and $\rC^*_\lambda(\bF_d)$ denote the $\rC^*$-algebra generated by the image of $\lambda$. Define a constant
\[
    H_d:=\inf\{\alpha : (\lambda_{a_1},\ldots,\lambda_{a_d})\prec \alpha(z_1,\ldots, z_d)\}.
\]
In previous work, this has also been denoted by $c_{f,0}^{(d)}$ and $c(u_f, u_0)$ (see \cite{gerhold2021dilations} and \cite{gerhold2025empirical}, respectively). By appealing to free probabilistic techniques and the geometry of free spectrahedra \cite[Theorems 3.9 and 3.10]{gerhold2021dilations}, it was established that
\[
    2\sqrt{1-\frac{1}{d}} \leq H_d \leq 2 \sqrt{1-\frac{1}{2d}}.
\]
In the case of $d=2$, we have that $\sqrt{2}\leq H_2 \leq \sqrt{3}$. Moreover, empirical methods appear to suggest that one should anticipate that the true value of $H_2$ is $\sqrt{2}$ \cite{gerhold2025empirical}. While we are unable to determine whether this is the case, our main theorem still tightens the current upper bound.

\begin{corollary}\label{c:haar-comm-dil-constant}
    The commuting dilation constant for a pair of free Haar unitaries $H_2$ is bounded above by $\frac{2}{\sqrt{\phi}}$.
\end{corollary}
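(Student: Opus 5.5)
Since $H_2$ is defined via the left regular representation, the plan is to push the universal unital completely positive map of Theorem \ref{t:uni-comm-dil-constant} down to $\rC^*_\lambda(\bF_2)$. Let $q:\rC^*(\bF_2)\rightarrow\rC^*_\lambda(\bF_2)$ be the canonical surjective $*$-homomorphism induced by the universal property, so that $q(\mathfrak{u}_a)=\lambda_a$ and $q(\mathfrak{u}_b)=\lambda_b$. Composing with the map $\Phi:\rC(\bT^2)\rightarrow\rC^*(\bF_2)$ of Theorem \ref{t:uni-comm-dil-constant} gives a unital completely positive map
\[
    \Psi:=q\circ\Phi:\rC(\bT^2)\longrightarrow \rC^*_\lambda(\bF_2)\subseteq B(\ell^2(\bF_2)),
\]
since a $*$-homomorphism is completely positive and compositions of unital completely positive maps are unital completely positive. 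Moreover,
\[
    \Psi(z)=\frac{\sqrt{\phi}}{2}\lambda_a \quad \text{and} \quad \Psi(w)=\frac{\sqrt{\phi}}{2}\lambda_b .
\]

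Next, I would apply Stinespring's dilation theorem to $\Psi$. This gives a Hilbert space $\K\supseteq\ell^2(\bF_2)$ and a unital $*$-representation $\pi:\rC(\bT^2)\rightarrow B(\K)$ such that $\Psi=P_{\ell^2(\bF_2)}\pi(\cdot)|_{\ell^2(\bF_2)}$. The pair $N_1=\pi(z)$, $N_2=\pi(w)$ is a pair of commuting unitaries, and
\[
    \lambda_a=P\Big(\tfrac{2}{\sqrt{\phi}}N_1\Big)\Big|, \qquad \lambda_b=P\Big(\tfrac{2}{\sqrt{\phi}}N_2\Big)\Big| .
\]
Hence $(\lambda_a,\lambda_b)\prec\frac{2}{\sqrt{\phi}}(z,w)$, and $\frac{2}{\sqrt{\phi}}$ belongs to the set over which the infimum defining $H_2$ is taken. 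Therefore $H_2\leq\frac{2}{\sqrt{\phi}}$.

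No step is a real obstacle. The only point to check is interpretive: the relation $\prec\alpha(z_1,z_2)$ in the definition of $H_d$ should mean dilation to $\alpha$ times some pair of commuting unitaries, equivalently a pair of commuting normals of norm at most $\alpha$, as in the formulation of $C_d$. This matches what the Stinespring representation produces. Alternatively, one could simply note that $H_2\leq C_2$ trivially, since $(\lambda_a,\lambda_b)$ is a pair of unitaries, and then invoke Corollary \ref{c:uni-comm-dilation}.
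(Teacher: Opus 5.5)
Your proposal is correct and follows the paper's own proof essentially verbatim. Like the paper, you compose the map $\Phi$ of Theorem \ref{t:uni-comm-dil-constant} with the canonical quotient $q:\rC^*(\bF_2)\rightarrow\rC^*_\lambda(\bF_2)$ and apply Stinespring's theorem to $q\circ\Phi$; your closing alternative is also valid, namely $H_2\leq C_2$ because $(\lambda_a,\lambda_b)$ is a pair of unitaries, combined with Corollary \ref{c:uni-comm-dilation}.
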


\begin{proof}
    Let $q:\rC^*(\bF_2)\rightarrow \rC^*_\lambda(\bF_2)$ denote the canonical surjective $*$-homomorphism. By Theorem \ref{t:uni-comm-dil-constant}, there is a unital completely positive map $\Phi:\rC(\bT^2)\rightarrow\rC^*(\bF_2)$ such that $\Phi(z) = \frac{\sqrt{\phi}}{2}\mathfrak{u}_a$ and $\Phi(w) = \frac{\sqrt{\phi}}{2}\mathfrak{u}_b$. Applying Stinespring's dilation theorem to the unital completely positive map $q\circ\Phi:\rC(\bT^2)\rightarrow\rC^*_\lambda(\bF_2)$, we can then conclude that $(\lambda_a,\lambda_b)\prec \frac{2}{\sqrt{\phi}}(z,w)$.
\end{proof}

Surprisingly, the constant $\frac{2}{\sqrt{\phi}}$ may also be appearing in the empirical data surrounding the computation of $H_2$ \cite[Section 4]{gerhold2025empirical}. To see this, let $U_1^{(N)}, U_2^{(N)}\subseteq\bM_N$ be a pair of independent unitaries that are sampled with respect to Haar measure on the unitary group of $N\times N$ matrices. Then, consider the random variable
\[
    c_2(N) =\inf\left\{ \alpha : (U_1^{(N)},U_2^{(N)})\prec\alpha(z,w)\right\}.
\]
The random variable $c_2(N)$ was used in \cite[Section 4]{gerhold2025empirical} to empirically test for the true value of $H_2$. Therein, no samples of $c_2(N)$ were seen to exceed $\frac{\sqrt{2}+\sqrt{3}}{2}\approx1.5731$. It is then perhaps conceivable that $\frac{2}{\sqrt{\phi}}\approx 1.5724$ could be the true value for $C_2$, although we have no innate reason to believe this either.

\subsection{Arbitrary d-tuples of operators}\label{ss:d-tuples}

We now provide a broader outlook into the true value of the universal commuting dilation constant for arbitrary $d$-tuples of operators. For this, recall that the strongest general bounds are that $\sqrt{d}\leq C_d \leq\sqrt{2d}$ for every integer $d\geq 2$. First, we bootstrap the construction in Theorem \ref{t:uni-comm-dil-constant} to gain access to a recursive upper bound on $C_d$ for arbitrary $d$-tuples. Asymptotically, this gives an upper bound of $C_d\leq \sqrt{2d-\log d+O(1)}$ as $d\rightarrow\infty$. After this, we prove a new lower bound on $C_d$ for every integer $d\geq4$.

We start by proving the recursive upper bound for $C_d$. In essence, this follows from the construction of Subsection \ref{ss:pairs} where we now treat the operator $W$ in the pair $(Z,W)$ as a $d$-tuple of operators, and then apply an averaging trick.

\begin{theorem}\label{t:recursive-bound}
    Fix a positive integer $d\geq 1$. For any constants $0 < r \leq p < 1$, we have that
    \[
        C_{d+1}\leq \frac{d+1}{A(p,r) + drC_d^{-1}}
    \]
    where
    \[
        A(p,r) = \sqrt{(1-p)\left( 1+\sqrt{p^2-r^2}\right)}.
    \]
\end{theorem}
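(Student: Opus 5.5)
Following the remark before the statement, I would rerun the construction of Theorem \ref{t:uni-comm-dil-constant}, but now let the ``$W$'' slot carry a $d$-tuple. Write $\bF_{d+1}$ with generators $a,b_1,\ldots,b_d$. Set $c=C_d^{-1}$; by the formulation in Section \ref{s:prelim}, for every $\varepsilon>0$ there is a unital completely positive map $\Psi:\rC(\bT^d)\to\rC^*(\bF_d)$ with $\Psi(z_j)=(c-\varepsilon)\mathfrak{u}_{b_j}$. By Stinespring, this gives a tuple of commuting unitaries $R_1,\ldots,R_d$ on some $\mathcal{L}\supseteq\H$ compressing to $(c-\varepsilon)\mathfrak{u}_{b_j}$. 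Rather than the scalar twist $\mathfrak{u}_{a^mba^{-m}}\otimes S$, I would use on $\H\otimes\ell^2(\bZ)\otimes\bC^2$ (or a Stinespring space attached fibrewise)
\[
Z(h\otimes\delta_m\otimes\eta)=\mathfrak{u}_a h\otimes\delta_{m+1}\otimes\eta,\qquad W_j(h\otimes\delta_m\otimes\eta)=\mathfrak{u}_{a^m}\Gamma_j\mathfrak{u}_{a^{-m}}h\otimes\delta_m\otimes S\eta .
\]
Here $\Gamma_j$ is the commuting unitary dilation of $\mathfrak{u}_{b_j}$. Concretely, I would realize the commuting family $\{a^m b_j a^{-m}\}_j$ for each $m$ by applying $\Psi$ composed with the inner automorphism $\mathrm{Ad}(\mathfrak{u}_{a^m})$ and taking a direct sum over $m$. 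The computation of Proposition \ref{p:cocyc-uni-comm-uni} then shows that the $W_j$ commute with each other (within each fibre) and with $Z$, since conjugating by $\mathfrak{u}_a$ shifts the fibre index. The compression then works as in Proposition \ref{p:univ-uni-compress}. With the vector $\xi$ of Theorem \ref{t:uni-comm-dil-constant}, built from general parameters $p,r$, we get $\Phi_1(w_j)=r(c-\varepsilon)\mathfrak{u}_{b_j}$, because only the $m=0$ term survives as $\langle S\xi_m,\xi_m\rangle=0$ for $m\ne0$. We also get $\Phi_1(z)=A(p,r)\mathfrak{u}_a$, by the same geometric-sum computation: $2\sqrt{q(A^2+q)}$ with $q=(1-p)/2$ and $A^2=\tfrac12(p+\sqrt{p^2-r^2})$. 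This simplifies to $\sqrt{(1-p)(1+\sqrt{p^2-r^2})}$. The image lies in $\rC^*(\bF_{d+1})$ by the norm-convergent series argument of Proposition \ref{p:univ-uni-compress}(ii).

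Next comes the averaging trick. For each $k=0,\ldots,d$, let $\sigma_k$ permute the coordinates of $\bT^{d+1}$ and $\beta_k$ permute the generators of $\bF_{d+1}$, so that the ``$a$-role'' is played by the $k$-th generator. Each $\beta_k\circ\Phi_1\circ\sigma_k$ is unital completely positive. It sends the distinguished coordinate to $A\,\mathfrak{u}$ and each of the remaining $d$ coordinates to $r(c-\varepsilon)\mathfrak{u}$. The average over the $d+1$ cyclic rotations sends every $z_i$ to
\[
\frac{A(p,r)+dr(c-\varepsilon)}{d+1}\,\mathfrak{u}_{a_i}.
\]
Letting $\varepsilon\to0$ and using the characterization of $C_{d+1}^{-1}$ as the largest such constant gives the stated bound.

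The main obstacle is the first step: making the $d$-tuple version of $W$ genuinely commuting while keeping the compression landing in $\rC^*(\bF_{d+1})$ with the exact coefficient. One issue is that $\Psi$ is only completely positive, not multiplicative. So I must work with an honest commuting unitary dilation $R$ living on a larger space, and handle the compression in two stages. First compress by $V$ to the fibre at each $m$, producing $\mathrm{Ad}(\mathfrak{u}_{a^m})\circ\Psi$. Then check that the cross terms $\langle S^s\xi_m,\xi_{m+r}\rangle$ still yield elements of $\rC^*(\bF_{d+1})$. Alternatively, I would phrase everything at the level of completely positive maps, or positive definite functions on $\bZ^{d+1}$, to avoid this bookkeeping. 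The second issue is that I must ensure $r\le p$ makes $\xi_0$ well defined, and the computation of $\Phi_1(z)$ is the same algebra as before.
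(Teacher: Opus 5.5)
Your outline agrees with the paper's proof everywhere except at the one step that is new relative to Theorem \ref{t:uni-comm-dil-constant}. You flag that step yourself as the main obstacle, and you leave it open.

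As written, $W_j(h\otimes\delta_m\otimes\eta)=\mathfrak{u}_{a^m}\Gamma_j\mathfrak{u}_{a^{-m}}h\otimes\delta_m\otimes S\eta$ is not an operator on $\H\otimes\ell^2(\bZ)\otimes\bC^2$. The commuting dilation $\Gamma_j=R_j$ acts on $\mathcal{L}\supsetneq\H$, where $\mathfrak{u}_{a^m}$ has not been defined, and your $Z$ is likewise only defined on $\H$. The direct-sum variant, with independent Stinespring dilations $\mathcal{L}_m$ of $\mathrm{Ad}(\mathfrak{u}_{a^m})\circ\Psi$, does not resolve this by itself. There $Z$ has to carry fibre $m$ to fibre $m+1$ by unitaries $T_m:\mathcal{L}_m\to\mathcal{L}_{m+1}$ that intertwine the two representations of $\rC(\bT^d)$ and act as $\mathfrak{u}_a$ on $\H$. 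The sentence ``conjugating by $\mathfrak{u}_a$ shifts the fibre index'' presupposes such operators, but none are constructed. So neither $ZW_j=W_jZ$ nor the compression formulas for $V^*ZV$ and $V^*W_jV$ is actually established.

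The missing idea is to extend $\mathfrak{u}_a$ to a unitary on the dilation space that reduces $\H$. Take a single Stinespring dilation $R_1,\ldots,R_d$ on $\mathcal{L}\supseteq\H$ and set $\tilde U=\mathfrak{u}_a\oplus I_{\mathcal{L}\ominus\H}$. The paper does the same thing by writing the dilation space as $\H\otimes\M$ with $\H\cong\H\otimes\omega$ and using $U=\mathfrak{u}_{a_1}\otimes I_\M$. Then define, on $\mathcal{L}\otimes\ell^2(\bZ)\otimes\bC^2$,
\[
Z(x\otimes\delta_m\otimes\eta)=\tilde Ux\otimes\delta_{m+1}\otimes\eta,\qquad W_j(x\otimes\delta_m\otimes\eta)=\tilde U^mR_j\tilde U^{-m}x\otimes\delta_m\otimes S\eta .
\]
The $W_j$ commute because the $R_j$ do, and $ZW_j=W_jZ$ is the computation of Proposition \ref{p:cocyc-uni-comm-uni}. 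Because $\tilde U$ commutes with $P_\H$, one has $P_\H\tilde U^{k}R_1^{s_1}\cdots R_d^{s_d}\tilde U^{-m}|_\H=\mathfrak{u}_a^{k}\Psi(z_1^{s_1}\cdots z_d^{s_d})\mathfrak{u}_a^{-m}$. This gives $V^*ZV=A(p,r)\mathfrak{u}_a$ and $V^*W_jV=r(c-\varepsilon)\mathfrak{u}_{b_j}$, together with the norm-convergent series placing the compression in $\rC^*(\bF_{d+1})$.

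Your direct-sum version can also be repaired, via uniqueness of minimal Stinespring dilations. If $(\pi_m,J_m)$ is minimal for $\mathrm{Ad}(\mathfrak{u}_{a^m})\circ\Psi$, then $(\pi_m,J_m\mathfrak{u}_a^*)$ is minimal for $\mathrm{Ad}(\mathfrak{u}_{a^{m+1}})\circ\Psi$. This yields unitaries $T_m$ with $T_m\pi_m(\cdot)=\pi_{m+1}(\cdot)T_m$ and $T_mJ_m=J_{m+1}\mathfrak{u}_a$.

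Once this step is supplied, the rest of your argument is exactly the paper's: the coefficient computation and the averaging over the $d+1$ rotations.
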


\begin{proof}
    Consider the vectors $v,\xi_0\in\bC^2$ given by
    \[
        v = \frac{e_1+e_2}{\sqrt{2}} \qquad \text{and} \qquad \xi_0 = \sqrt{\frac{p+r}{2}} e_1 + \sqrt{\frac{p-r}{2}}e_2.
    \]
    In addition, let $S\in B(\bC^2)$ be the unitary defined by $Se_1 = e_1$ and $Se_2 = -e_2$. Setting $q = \frac{1-p}{2}$, we define
    \[
        \rho = \sqrt{\frac{q}{\langle\xi_0, v\rangle^2+q}}, \qquad \alpha_m = \sqrt{q(1-\rho^2)}\rho^{m-1}, \qquad m\geq 1,
    \]
    and set $\xi_m = \alpha_{|m|}v$ for each non-zero $m\in\bZ$. As in Theorem \ref{t:uni-comm-dil-constant}, we have that $\xi = \sum_{m\in\bZ}\delta_m\otimes\xi_m$ is a unit vector in $\ell^2(\bZ)\otimes\bC^2$ that satisfies
    \[
        \sum_{m\in\bZ}\langle\xi_m, \xi_{m+1}\rangle = A(p,r), \qquad \langle S\xi_m, \xi_m\rangle = \delta_{m,0}r.
    \]

    Next, we construct a $(d+1)$-tuple of commuting unitaries. First, represent $\rC^*(\bF_{d+1})$ faithfully on some Hilbert space $\H$. Fix $\varepsilon>0$. Then, there is $\tau>C_d^{-1}-\varepsilon$ and a unital completely positive map $\psi:\rC(\bT^d)\rightarrow\rC^*(\mathfrak{u}_{a_2}, \ldots, \mathfrak{u}_{a_{d+1}})$ such that $\psi(z_j) = \tau\mathfrak{u}_{a_j}$ for each $2\leq j\leq d+1$. By Stinespring's dilation theorem \cite{stinespring1955positive}, we then obtain some Hilbert space $\M$, a $d$-tuple $(Y_2,\ldots, Y_{d+1})$ of commuting unitary operators on $\H\otimes\M$ and a unit vector $\omega\in\M$ such that, if $Q:\H\rightarrow\H\otimes\M$ denotes the isometry defined by $Q h = h\otimes \omega$, then we have that
    \[
        Q^* Y_j Q = \tau\mathfrak{u}_{a_j}, \qquad 2\leq j\leq d+1,
    \]
    and that $Q^*\rC^*(Y_2, \ldots, Y_{d+1})Q \subseteq \rC^*(\mathfrak{u}_{a_2},\ldots, \mathfrak{u}_{a_{d+1}})$. In turn, consider the unitary operator $U := \mathfrak{u}_{a_1}\otimes I_\M$ on $\H\otimes\M$ and define operators $Z_1,\ldots, Z_{d+1}$ on $\K = (\H\otimes\M)\otimes\ell^2(\bZ)\otimes\bC^2$ by
    \[
        Z_1(x\otimes\delta_m \otimes\eta) = Ux\otimes \delta_{m+1}\otimes\eta, \qquad Z_j(x\otimes \delta_m\otimes\eta) = U^mY_jU^{-m}x\otimes \delta_m \otimes S\eta,
    \]
    where $2\leq j\leq d+1$. It is straightforward to see that $Z_2,\ldots, Z_{d+1}$ are pairwise commuting. Furthermore, from Proposition \ref{p:cocyc-uni-comm-uni}, it follows that $(Z_1, \ldots, Z_{d+1})$ is a $(d+1)$-tuple of commuting unitary operators.

    We now construct a unital completely positive map $\Psi_1:\rC(\bT^{d+1})\rightarrow \rC^*(\bF_{d+1})$ from the $(d+1)$-tuple $(Z_1,\ldots, Z_{d+1})$. For this, consider the isometry $V:\H\rightarrow\K$ defined by $Vh = \sum_{m\in\bZ} h\otimes\omega\otimes\delta_m\otimes\xi_m$. Then, we have that
    \[
        V^*Z_1V = \left(\sum_{m\in\bZ}\langle\xi_m, \xi_{m+1}\rangle\right)\mathfrak{u}_{a_1} = A(p,r)\mathfrak{u}_{a_1}
    \]
    and that, for $2\leq j\leq d+1$,
    \[
        V^*Z_j V = \sum_{m\in\bZ} \langle S\xi_m, \xi_m\rangle Q^* U^m Y_jU^{-m}Q = rQ^* Y_j Q = r\tau\mathfrak{u}_{a_j}.
    \]
    Since $Z_1,\ldots, Z_{d+1}$ are commuting unitary operators, there exists a unital completely positive map $\Psi_1:\rC(\bT^{d+1})\rightarrow B(\H)$ such that $\Psi_1(z_j) = V^*Z_j V$ for each $1\leq j\leq d+1$. Furthermore, similar to Proposition \ref{p:univ-uni-compress}, one has that
    \[
        V^* Z_1^{s_1}\ldots Z_{d+1}^{s_{d+1}}V = \sum_{m\in\bZ} \langle S^{s_2+\ldots +s_{d+1}}\xi_m, \xi_{m+s_1}\rangle \mathfrak{u}_{a_1}^{m+s_1} Q^* Y_2^{s_2}\ldots Y_{d+1}^{s_{d+1}} Q \mathfrak{u}_{a_1}^{-m}
    \]
    for any $s_1,\ldots, s_{d+1}\in\bZ$. Therefore $\Psi_1(\rC(\bT^{d+1}))\subseteq\rC^*(\bF_{d+1})$, as desired.

    We now have a unital completely positive map $\Psi_1: \rC(\bT^{d+1})\rightarrow\rC^*(\bF_{d+1})$ with
    \[
        \Psi_1(z_1) = A(p,r)\mathfrak{u}_{a_1}, \quad \Psi_1(z_j) = r\tau\mathfrak{u}_{a_j}, \quad 2\leq j\leq d+1.
    \]
    Similarly, for each $1\leq i \leq d+1$, there exists a unital completely positive map $\Psi_i:\rC(\bT_{d+1})\rightarrow\rC^*(\bF_{d+1})$ such that
    \[
        \Psi_i(z_j) = \begin{cases}
            A(p,r)\mathfrak{u}_{a_j}, & j=i,\\
            r\tau\mathfrak{u}_{a_j}, & j\neq i.
        \end{cases}
    \]
    In turn, we define a unital completely positive map $\Psi:\rC(\bT^{d+1})\rightarrow \rC^*(\bF_{d+1})$ by $\Psi = \frac{1}{d+1} \sum_{i=1}^{d+1}\Psi_i$. Then, we have that
    \[
        \Psi(z_j) = \frac{A(p,r) + dr \tau}{d+1}\mathfrak{u}_{a_j}, \qquad 1\leq j\leq d+1.
    \]
    Consequently, we have that
    \[
        C_{d+1} \leq \frac{d+1}{A(p,r) + dr \tau}.
    \]
    Finally, as $\tau>C_d^{-1}-\varepsilon$ and $\varepsilon>0$ was arbitrary, it follows that
    \[
        C_{d+1} \leq \frac{d+1}{A(p,r) + dr C_d^{-1}}.
    \]
\end{proof}

Theorem \ref{t:recursive-bound} improves upon the available upper bounds on $C_d$, especially for small values of $d$. Indeed, the strongest known bounds for $C_3$ are that $1.858\lesssim C_3 \lesssim 2.449$ (see \cite[Corollary 4.7]{gerhold2021dilations} and \cite[Theorem 4.4]{passer2019shape}). We highlight the degree to which Theorem \ref{t:recursive-bound} improves upon this particular upper bound.

\begin{corollary}\label{c:triple-upper}
    The universal commuting dilation constant $C_3$ is bounded above by $ 3\sqrt{\frac{2}{1+\sqrt{7+2\sqrt{5}}}}\approx 2.025$.
\end{corollary}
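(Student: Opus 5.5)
The plan is to apply Theorem \ref{t:recursive-bound} with $d=2$ and feed in the bound of Corollary \ref{c:uni-comm-dilation}. Since $C_2\leq \frac{2}{\sqrt{\phi}}$, we have $C_2^{-1}\geq \frac{\sqrt{\phi}}{2}$. The right-hand side of Theorem \ref{t:recursive-bound} is decreasing in $C_d^{-1}$ because $r>0$. Hence, for every $0<r\leq p<1$,
\[
C_3\leq \frac{3}{A(p,r)+2r\cdot\frac{\sqrt{\phi}}{2}}=\frac{3}{\sqrt{(1-p)\left(1+\sqrt{p^2-r^2}\right)}+\sqrt{\phi}\,r}.
\]
It therefore suffices to exhibit admissible $p,r$ with
\[
F(p,r):=\sqrt{(1-p)\left(1+\sqrt{p^2-r^2}\right)}+\sqrt{\phi}\,r=\sqrt{\frac{1+\sqrt{7+2\sqrt5}}{2}} .
\]
This target is approximately $1.481$. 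Better still, we would show this is the supremum of $F$, which explains the form of the constant.

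To locate the optimum, substitute $s=\sqrt{p^2-r^2}\in[0,p)$, so that $r=\sqrt{p^2-s^2}$. Then solve the critical point equations $\partial_p F=\partial_s F=0$ (equivalently $\partial_p F=\partial_r F=0$).

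\begin{itemize}
\item Differentiating in $r$ gives
\[
\frac{\sqrt{1-p}\,r}{2s\sqrt{1+s}}=\sqrt{\phi}.
\]
\item Differentiating in $p$ gives
\[
-\frac{\sqrt{1+s}}{2\sqrt{1-p}}+\frac{\sqrt{1-p}\,p}{2s\sqrt{1+s}}=0,
\quad\text{that is,}\quad (1-p)p=s(1+s).
\]
\end{itemize}

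These are algebraic relations. Eliminating, I expect $p$, $s$ and $r$ to lie in $\mathbb{Q}(\sqrt5,\sqrt{7+2\sqrt5})$. Rough numerics ($p\approx0.85$, $r\approx0.82$ give $F\approx1.47$) are consistent with a maximum near $1.481$.

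Having found the closed forms, the remaining steps are verification.

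\begin{enumerate}
\item Check that $0<r\leq p<1$.
\item Compute $F(p,r)$ exactly and simplify it to $\sqrt{(1+\sqrt{7+2\sqrt5})/2}$. Squaring is the natural route: the relation $(1-p)p=s(1+s)$ should collapse the cross terms.
\item Conclude that
\[
C_3\leq 3\sqrt{\frac{2}{1+\sqrt{7+2\sqrt5}}}\approx 2.025 .
\]
\end{enumerate}

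The main obstacle is the elimination and nested-radical simplification in solving the critical point system, together with checking that the critical point is admissible. If the exact solution is messy, a fallback is to present explicit values of $p$ and $r$ directly and verify $F(p,r)$ by squaring twice. For the corollary as stated, only the value of $F$ at one admissible point is needed, not optimality.
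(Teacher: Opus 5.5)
Your proposal is correct and follows the paper's proof: Theorem~\ref{t:recursive-bound} with $d=2$, combined with $C_2^{-1}\geq \frac{\sqrt{\phi}}{2}$ and monotonicity in $C_d^{-1}$, gives $C_3\leq \inf_{0<r\leq p<1} \frac{3}{A(p,r)+r\sqrt{\phi}}$, and the paper likewise treats the remaining optimization as straightforward. Your critical-point equations are right, and the elimination is quick. Substituting $1-p=s(1+s)/p$ into the squared $r$-equation gives $r^2=p^2-s^2=4\phi ps$, hence $p/s=1+\sqrt{5}+\sqrt{7+2\sqrt{5}}$, and this produces an admissible point ($p\approx 0.830$, $r\approx 0.821$) at which $F^2=\frac{1+\sqrt{7+2\sqrt{5}}}{2}$ exactly; note that $r=2\sqrt{\phi ps}$ does not lie in $\mathbb{Q}(\sqrt{5},\sqrt{7+2\sqrt{5}})$, contrary to your guess, but nothing depends on that.
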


\begin{proof}
    By Theorems \ref{t:uni-comm-dil-constant} and \ref{t:recursive-bound}, it follows that
    \[
        C_3 \leq \inf_{0<r\leq p<1}\frac{3}{A(p,r) + r\sqrt{\phi}}.
    \]
    By a straightforward optimization argument, this upper bound is readily seen to simplify to
    \[
        C_3 \leq 3\sqrt{\frac{2}{1+\sqrt{7+2\sqrt{5}}}}.
    \]
\end{proof}

In addition, Theorem \ref{t:recursive-bound} gives greater access to large-scale control of the universal commuting dilation constant. Unfortunately, part of this requires a detailed estimate that distracts from the discussion at hand. As such, we choose to defer some of the finer details until Appendix \ref{s:recursion}. The author thanks Adam Dor-On for helpful contributions concerning these finer details.

\begin{theorem}\label{t:asy-bound}
    The following statements hold.
    \begin{enumerate}[{\rm (i)}]
        \item For every positive integer $d\geq1$, we have that $C_d<\sqrt{2d}$.
        \item We have that $C_d$ is bounded above by $\sqrt{2d-\log d + O(1)}$ as $d\rightarrow\infty$.
    \end{enumerate}
\end{theorem}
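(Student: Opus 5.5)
Throughout, write $x_d := C_d^{-1}$ and $s_d := d\,x_d$. Theorem \ref{t:recursive-bound} then reads
\[
s_{d+1} \;\geq\; A(p,r) + r\,s_d \qquad \text{for all } 0<r\leq p<1 .
\]
Both statements amount to lower bounds on $s_d$, since $C_d^2 = d^2/s_d^2$. Statement (i) is equivalent to $s_d^2 > d/2$. Statement (ii) follows if we show $s_d^2 \geq \frac d2 + c\log d - O(1)$ with $c = \frac14$, because then
\[
C_d^2=\frac{2d}{1+2c\log d/d+O(1/d)}=2d-4c\log d+O(1).
\]
The plan is to choose $p,r$ depending on the current value $s_d$, obtain a one-step inequality for $s_{d+1}^2-s_d^2$, and telescope it.

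For (i) I would take the simplest choice $r=p=1-t$. Then $A(p,p)=\sqrt{t}$, so $s_{d+1}\geq s_d+\sqrt t - t s_d$. Maximizing over $t$ gives $t=1/(4s_d^2)$. This choice is admissible since $s_1=1$ and the sequence is nondecreasing, so $0<t<1$. It yields
\[
s_{d+1}\geq s_d+\frac{1}{4s_d}, \qquad\text{hence}\qquad s_{d+1}^2\geq s_d^2+\frac12+\frac{1}{16 s_d^2} > s_d^2+\frac12 .
\]
With the base case $s_1=C_1^{-1}=1>\sqrt{1/2}$, induction gives $s_d^2>d/2$ for every $d$. This is exactly $C_d<\sqrt{2d}$, with strictness holding already at $d=1$ and propagated strictly at every step.

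For (ii), the same telescoping gives
\[
s_d^2\geq \frac d2+\sum_{k<d}\frac1{16 s_k^2}.
\]
By (i) and the crude upper bound $s_k\leq k$ one gets $s_k^2\leq C\,k$ eventually, so the sum is at least $c'\log d$. To get the constant $\frac14$ needed for the $-\log d$ term, I would use the extra freedom $r<p$. Write $p=1-t$ and $r=p-u$ with $t,u$ small. Then $\sqrt{p^2-r^2}\approx\sqrt{2u}$, so
\[
A(p,r)\approx\sqrt{t}\,\bigl(1+\tfrac12\sqrt{2u}\bigr).
\]
The one-step gain $A(p,r)-(t+u)s_d$ should then be optimized jointly, with $t\sim 1/s_d^2$ and $u\sim 1/s_d^2$ as well. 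The aim is to show that the gain per step has the form $\frac{\kappa}{s_d}-O(s_d^{-3})$ with $\kappa$ large enough that
\[
s_{d+1}^2\geq s_d^2+\frac12+\frac{\kappa'}{s_d^2}-O(s_d^{-4}).
\]
Plugging in $s_d^2=\frac d2(1+o(1))$, which comes from (i) together with the upper bound $C_d\geq\sqrt d$, and summing, the correction terms give $\kappa''\log d+O(1)$.

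The main obstacle is this last step: an exact (not merely asymptotic) optimization of $A(p,r)$ over $r<p$ with error terms uniform in $d$, plus a bootstrap controlling $s_d^2-\frac d2$ so that the sum of the $O(s_d^{-4})$ errors converges. This is presumably what the paper defers to Appendix \ref{s:recursion}. I would first try explicit choices $t=\alpha/s_d^2$, $u=\beta/s_d^2$, expand to order $s_d^{-3}$, and check that the resulting coefficient gives the $-\log d$ term. If it falls short, I would fall back to the exact optimizer defined implicitly by the first-order conditions.
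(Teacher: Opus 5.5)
Your part (i) is correct and is essentially the paper's argument. Both take $r=p$ in Theorem \ref{t:recursive-bound}: the paper uses $1-p=\frac{1}{2d}$, you use the optimized $1-p=\frac{1}{4s_d^2}$, and both then induct.

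Part (ii) has a genuine gap at the step ``plugging in $s_d^2=\frac d2(1+o(1))$, which comes from (i) together with $C_d\geq\sqrt d$''. Those two facts give only $\frac d2<s_d^2\le d$. For the actual constants, $s_d^2\sim\frac d2$ would mean $C_d\sim\sqrt{2d}$, which is not known; the paper's best lower bound is about $1.0916\sqrt d$. This step is exactly where the coefficient of $\log d$ is decided.

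The best one-step inequality available is $s_{d+1}^2\geq s_d^2+\frac12+\frac{1}{8s_d^2}-O(s_d^{-4})$. You get it, for example, from the explicit choice $p=1-\frac{1}{4s_d^2}$, $r=p-\frac{1}{32s_d^4}$. Note that $p-r$ must be of order $s_d^{-4}$, not $s_d^{-2}$ as you propose; otherwise the cost $(p-r)s_d$ swamps the gain. The constant $\frac18$ is sharp, by the expansion in the proof of Lemma \ref{l:asy-2}.

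Summing this inequality with only $s_k^2\le k$ available gives $s_d^2\geq\frac d2+\frac18\log d-O(1)$. That is only $C_d^2\le 2d-\frac12\log d+O(1)$, not $2d-\log d+O(1)$. Note also that $s_k^2\le k$ comes from $C_k\geq\sqrt k$, not from (i) or from $s_k\le k$. So the real obstacle is not exact optimization, since any admissible explicit $(p,r)$ gives a valid lower bound. Nor is it the $O(s_d^{-4})$ errors, which are summable by (i) alone. It is the missing upper bound on $s_k$.

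The paper gets around this by comparing with an exact orbit. It defines $\gamma_d$ by the recursion with equality and notes that $\sup_r\bigl(M(r)+dr\gamma\bigr)$ is nondecreasing in $\gamma$, since $r\geq0$. Induction then gives $C_d^{-1}\geq\gamma_d$. Because $\gamma_d$ obeys an exact recursion, one gets two-sided expansions (Lemma \ref{l:asy-2}), then $\gamma_d^{-2}/d\to 2$, and finally $\gamma_d^{-2}=2d-\log d+O(1)$ (Proposition \ref{p:asy-3}).

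You can repair your route the same way. The function $g(x)=x+\frac12+\frac{1}{8x}-\frac{K}{x^2}$ is increasing for large $x$. So $s_d^2\geq y_d$ for the orbit $y_{d+1}=g(y_d)$ started at $y_{d_0}=s_{d_0}^2$ with $d_0$ large. For this orbit, $y_d=\frac d2+O(\log d)$ holds automatically. Hence $\sum_k\frac{1}{8y_k}=\frac14\log d+O(1)$, so $y_d=\frac d2+\frac14\log d+O(1)$ and $C_d^2\le d^2/y_d=2d-\log d+O(1)$.
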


\begin{proof}
    (i): The statement follows from an inductive argument. Indeed, assuming that $C_d<\sqrt{2d}$, Theorem \ref{t:recursive-bound} implies that
    \[
        C_{d+1}\leq \frac{d+1}{A\left(1-\frac{1}{2d},1-\frac{1}{2d}\right) + d\left(1-\frac{1}{2d}\right)C_d^{-1}}.
    \]
    Then note that $A\left(1-\frac{1}{2d},1-\frac{1}{2d}\right) = \frac{1}{\sqrt{2d}}$ and so,
    \begin{align*}
        A\left(1-\frac{1}{2d},1-\frac{1}{2d}\right) + d\left(1-\frac{1}{2d} \right)C_d^{-1}  & > \frac{1}{\sqrt{2d}} + d\left(1-\frac{1}{2d}\right)\frac{1}{\sqrt{2d}}\\
        & = \frac{2d+1}{2\sqrt{2d}} > \sqrt{\frac{d+1}{2}}.
    \end{align*}
    Thus, we obtain that $C_{d+1}<\sqrt{2(d+1)}$.

    (ii): For each $0\leq r\leq 1$, set $M(r) = \sup_{p\leq r\leq 1} A(p,r)$. Then, recursively define a sequence of constants by
    \[
        \gamma_{d+1} = \sup_{0\leq r\leq 1}\frac{M(r)+dr\gamma_d}{d+1}, \qquad \gamma_1=1.
    \]
    By Theorem \ref{t:recursive-bound}, we have that $C_d\leq \frac{1}{\gamma_d}$. In Proposition \ref{p:asy-3} of Appendix \ref{s:recursion}, it is found that
    \[
        \frac{1}{\gamma_d} = \sqrt{2d-\log d+O(1)} \qquad \text{as $d\rightarrow\infty$},
    \]
    from which the conclusion follows.
\end{proof}

Unfortunately, tight upper bounds on the universal commuting dilation constant still appear somewhat out of reach for arbitrary $d$-tuples. Indeed, the recursive upper bound in Theorem \ref{t:recursive-bound} is ultimately tied to the construction afforded by Subsection \ref{ss:pairs}. However, as Theorem \ref{t:asy-bound} illustrates, bootstrapping this construction does not necessarily retain a tight control on $C_d$ as $d$ becomes large.

Accordingly, we now extract new lower bounds associated to $C_d$ for arbitrary $d\geq 4$. In the case of $d=2$ and $d=3$, the strongest known lower bound was found by considering $q$-commuting unitaries (see \cite[Section 7]{gerhold2022dilations} and \cite[Corollary 4.7]{gerhold2021dilations}). We prove a lower bound on the universal commuting dilation constant for arbitrary $d$-tuples through an explicit construction involving pairs of $q$-commuting unitaries.

For this, we recall some relevant background from \cites{gerhold2021dilations,gerhold2022dilations}. Given a real anti-symmetric matrix $\Theta = [\theta_{k,l}]\in\bM_d$, we let $w(\Theta) = (w_1(\Theta),\ldots, w_d(\Theta))$ denote the $d$-tuple of unitaries satisfying $w_l(\Theta)w_k(\Theta) = e^{i\theta_{k,l}}w_k(\Theta) w_l(\Theta)$ and such that, whenever $v = (v_1,\ldots, v_d)$ is some other $d$-tuple of unitaries such that $v_lv_k = e^{i\theta_{k,l}}v_k v_l$, then there is a unital surjective $*$-homomorphism
\[
    \sigma: \rC^*(w_1(\Theta),\ldots, w_d(\Theta))\longrightarrow\rC^*(v_1,\ldots, v_d), \qquad w_i(\Theta)\longmapsto v_i.
\]
When the context is clear, we will suppress $\Theta$ and simply write $(w_1, \ldots, w_d)$. As in \cite{gerhold2021dilations}, we let $c_\Theta$ denote the smallest constant such that $w(\Theta)\prec c_\Theta (z_1, \ldots, z_d)$ where $z_1,\ldots, z_d\in\rC(\bT^d)$ are the coordinate functions. In particular, one has that $C_d\geq c_\Theta$. In \cite[Theorem 4.4]{gerhold2021dilations}, it was shown that
\[
    c_\Theta = \frac{1}{\inf\{\|\text{Re} X\| : X\in \text{conv}(w(\Theta))\}}.
\]

Given a real number $\theta$, we let $c_\theta$ denote the constant $c_\Theta$ corresponding to the $2\times 2$ anti-symmetric matrix $\Theta$ with zero diagonal and $(1,2)$-entry equal to $\theta$. In which case, we simply denote the pair of operators $(w_1(\Theta), w_2(\Theta))$ by $(w_1(\theta), w_2(\theta))$. We remark that, in this setting, the constant $c_\theta$ coincides with the constant from \cite{gerhold2022dilations}. A priori, the constant $c_\theta$ only gives access to lower bounds on $C_2$. However, through the use of an explicit matrix construction, one can upgrade this to new lower bounds on the universal commuting dilation constant for arbitrary $d$-tuples of operators.

\begin{theorem}\label{t:uni-dil-lower}
    Let $d\geq 2$ be a positive integer and express $d=\sum_{j=1}^k d_j$ as a sum of positive integers $d_j \geq 2$. If $\Theta_1, \ldots,\Theta_k$ are real anti-symmetric matrices where $\Theta_j$ is of size $d_j\times d_j$, then we have that
    \[
        C_d \geq \sqrt{\sum_{j=1}^k c_{\Theta_j}^2}.
    \]
\end{theorem}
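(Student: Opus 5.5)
The plan is to find one real anti-symmetric $d\times d$ matrix $\Theta$ with $c_\Theta\geq\sqrt{\sum_j c_{\Theta_j}^2}$. Then $C_d\geq c_\Theta$ finishes the proof, and $c_\Theta$ can be bounded with \cite[Theorem 4.4]{gerhold2021dilations}:
\[
c_\Theta^{-1}=\inf\{\|\mathrm{Re}\,X\| : X\in\conv(w(\Theta))\}.
\]
A first idea is to take $\Theta=\Theta_1\oplus\cdots\oplus\Theta_k$, so that different blocks commute. This does not work. In that case $w(\Theta)$ is the tensor product of the tuples $w(\Theta_j)$, and $X=\sum_j t_jX_j$ with $X_j\in\conv(w(\Theta_j))$. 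Since $w\mapsto -w$ is an automorphism, each $\mathrm{Re}\,X_j$ has symmetric spectrum, so $\|\mathrm{Re}\,X\|=\sum_j t_j\|\mathrm{Re}\,X_j\|$. Minimizing over the simplex only gives $\max_j c_{\Theta_j}$. The sum of squares instead points to making the blocks \emph{anticommute} with one another, so that norms add in quadrature.

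So I take $\Theta$ with diagonal blocks $\Theta_1,\ldots,\Theta_k$ and every off-block entry equal to $\pi$. Concretely, pick self-adjoint unitaries $\gamma_1,\ldots,\gamma_k$ that pairwise anticommute, for instance from a Clifford/Pauli construction. For $i$ in the $j$-th block, set
\[
v_i=(I\otimes\cdots\otimes w_i(\Theta_j)\otimes\cdots\otimes I)\otimes\gamma_j .
\]
Here the $j$-th tensor slot carries the universal algebra $\rC^*(w(\Theta_j))$. Then within block $j$ the $v_i$ satisfy the $\Theta_j$-relations, since $\gamma_j^2=I$. Across blocks they anticommute, i.e.\ they satisfy the $e^{i\pi}$-relations. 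Hence $v$ is a representation of $w(\Theta)$.

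Next I bound $\|\mathrm{Re}\,X\|$ for $X\in\conv(v)$. Fix $\varepsilon>0$ and choose $X_j\in\conv(w(\Theta_j))$ with $\|\mathrm{Re}\,X_j\|\leq c_{\Theta_j}^{-1}+\varepsilon$. Put
\[
X=\sum_j t_j\,\widetilde X_j\otimes\gamma_j, \qquad t_j=\frac{c_{\Theta_j}^2}{\sum_l c_{\Theta_l}^2},
\]
where $\widetilde X_j$ denotes $X_j$ placed in the $j$-th slot. Write $A_j=\mathrm{Re}\,\widetilde X_j\otimes\gamma_j$. Because the $\widetilde X_j$ sit in different tensor slots and the $\gamma_j$ anticommute, the cross terms cancel:
\[
\Bigl(\sum_j t_jA_j\Bigr)^2=\sum_j t_j^2(\mathrm{Re}\,\widetilde X_j)^2\otimes I .
\]
This gives
\[
\|\mathrm{Re}\,X\|^2\leq\sum_j t_j^2\bigl(c_{\Theta_j}^{-1}+\varepsilon\bigr)^2 .
\]
As $\varepsilon\to0$ the right-hand side tends to $\bigl(\sum_j c_{\Theta_j}^2\bigr)^{-1}$, so
\[
\inf_{X\in\conv(v)}\|\mathrm{Re}\,X\|\leq\Bigl(\sum_j c_{\Theta_j}^2\Bigr)^{-1/2}.
\]

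The main obstacle is to move this estimate from the concrete tuple $v$ to the universal tuple $w(\Theta)$. The formula for $c_\Theta$ involves norms in $\rC^*(w(\Theta))$, and those norms can only be larger than in a quotient. So I need the canonical surjection $\rC^*(w(\Theta))\to\rC^*(v)$ to be isometric, i.e.\ $v$ must generate a copy of the universal algebra. I would prove this with the standard argument for twisted group $\rC^*$-algebras of $\bZ^d$. The algebra $\rC^*(v)$ carries a gauge action of $\bT^d$, obtained by tensoring the gauge actions on the factors $\rC^*(w(\Theta_j))$ (the $\gamma_j$ are unaffected). Because $\bZ^d$ is amenable, full and reduced twisted group algebras agree. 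Hence any representation carrying a compatible dual $\bT^d$-action and a faithful conditional expectation onto the scalars is faithful, by a gauge-invariant uniqueness argument. If this step proves awkward, I would instead go back to the proof of \cite[Theorem 4.4]{gerhold2021dilations}. Its easy direction, $v\prec\alpha z\Rightarrow \alpha\geq 1/\inf\|\mathrm{Re}\,X\|$, only uses the symmetry $v\mapsto -v$ and positivity of ucp maps, so it should apply directly to $v$. Given either route, $C_d\geq c_\Theta\geq\sqrt{\sum_j c_{\Theta_j}^2}$.
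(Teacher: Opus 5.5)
Your proof is correct. Its core matches the paper's: the same $\Theta$ (diagonal blocks $\Theta_j$, every off-block entry $\pi$), the same weights $t_j=c_{\Theta_j}^2/\sum_l c_{\Theta_l}^2$, and the same cancellation of cross terms because real parts from different blocks anticommute.

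The difference is where you run the computation. You build a concrete model $v$ from tensor slots and Clifford unitaries $\gamma_j$. That forces you to prove $\rC^*(w(\Theta))\to\rC^*(v)$ is isometric. Your gauge-invariant uniqueness argument for this is sound: the tensor-product gauge action on $\rC^*(v)$ is equivariant with the one on $\rC^*(w(\Theta))$, and amenability of $\bZ^d$ makes the canonical trace on the universal algebra faithful.

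The paper never leaves $\rC^*(w(\Theta))$, so this step never arises.
\begin{itemize}
\item The block-$j$ generators $(w_i(\Theta))_{i\in B_j}$ satisfy the $\Theta_j$-relations. Universality then gives a $*$-homomorphism $\rC^*(w(\Theta_j))\to\rC^*(w(\Theta))$, which is contractive.
\item So a near-optimal $X_j\in\conv(w(\Theta_j))$ is carried into the convex hull of the $j$-th block of $w(\Theta)$. Its real part still has norm at most $c_{\Theta_j}^{-1}+\varepsilon$.
\item Anticommutation across blocks is part of the defining relations. With unitarity, $w_lw_m=-w_mw_l$ gives $w_mw_l^*=-w_l^*w_m$, so $\mathrm{Re}X_j$ and $\mathrm{Re}X_{j'}$ anticommute. (The paper cites Fuglede's theorem here, but this direct computation suffices.)
\item Hence $\bigl(\sum_j t_j\mathrm{Re}X_j\bigr)^2=\sum_j t_j^2(\mathrm{Re}X_j)^2$ holds in the universal algebra itself.
\end{itemize}
Every norm comparison goes in the favourable direction. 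The ``main obstacle'' you identify is created by the detour through a concrete model. That model gives an explicit picture of $w(\Theta)$ but contributes nothing the bound needs.

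One caution about your fallback. The lower-bound direction of \cite[Theorem 4.4]{gerhold2021dilations} cannot follow from the symmetry $v\mapsto -v$ and positivity of ucp maps alone. Take $\sigma=\mathrm{diag}(1,-1)$. The commuting unitary pair $(\sigma,-\sigma)$ has an automorphism of $\rC^*(\sigma)\cong\bC^2$ sending it to its negative, and it trivially dilates with constant $1$. Yet $\mathrm{Re}\bigl(\tfrac12\sigma+\tfrac12(-\sigma)\bigr)=0$, so the claimed implication would force the constant to be infinite. That direction really needs the full $\bT^d$ gauge symmetry or universality. So the fallback would not let you avoid the structure your first route already invokes.
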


\begin{proof}
    Define a real anti-symmetric matrix $\Theta$ with diagonal blocks $\Theta_1, \ldots, \Theta_k$, respectively, and whose entries above the diagonal blocks are all equal to $\pi$. For brevity, we write $w = (w_1, \ldots, w_d)$ for the $d$-tuple given by $(w_1(\Theta), \ldots, w_d(\Theta))$. Since the entries of $\Theta$ above the diagonal blocks are all equal to $\pi$, it follows that $w_i$ and $w_j$ are anti-commuting whenever $i$ and $j$ correspond to different blocks.

    Let $\varepsilon>0$. By \cite[Theorem 4.4]{gerhold2021dilations}, for each $j=1,\ldots, k$, there is
    \[
        X_j\in\text{conv}\{ w_i : i \text{ corresponds to the $\Theta_j$ block}\}
    \]
    such that $\|\text{Re} X_j\| \leq c_{\Theta_j}^{-1} + \varepsilon$. For $i$ and $j$ belonging to different blocks, Fuglede's theorem implies that, since $w_i$ and $w_j$ anti-commute, the operators $\text{Re}X_i$ and $\text{Re}X_j$ anti-commute as well. Consequently, we have that
    \begin{align*}
        \left\|\sum_{j=1}^k \left( \frac{c_{\Theta_j}^2}{\sum_{\ell=1}^k c_{\Theta_\ell}^2} \right) \text{Re} X_j \right\|^2 & = \left\|\sum_{j=1}^k \left( \frac{c_{\Theta_j}^2}{\sum_{\ell=1}^k c_{\Theta_\ell}^2} \right)^2 (\text{Re} X_j)^2 \right\|\\
        & \leq \sum_{j=1}^k \left( \frac{c_{\Theta_j}^2}{\sum_{\ell=1}^k c_{\Theta_\ell}^2} \right)^2 \|\text{Re} X_j\|^2\\
        & \leq \sum_{j=1}^k \left( \frac{c_{\Theta_j}^2}{\sum_{\ell=1}^k c_{\Theta_\ell}^2} \right)^2 (c_{\Theta_j}^{-1} + \varepsilon)^2\\
        & = \left( \sum_{\ell=1}^k c_{\Theta_\ell}^2\right)^{-2} \left( \sum_{j=1}^k c_{\Theta_j}^4(c_{\Theta_j}^{-1}+\varepsilon)^2\right).
    \end{align*}
    So, we have that
    \[
        \inf_{X\in\conv(w)}\| \text{Re}X\|^2 \leq \left( \sum_{\ell=1}^k c_{\Theta_\ell}^2\right)^{-2} \left( \sum_{j=1}^k c_{\Theta_j}^4(c_{\Theta_j}^{-1}+\varepsilon)^2\right),
    \]
    and, as $\varepsilon>0$ was arbitrary, it follows that
    \[
        \inf_{X\in\conv(w)}\| \text{Re}X\|^2 \leq\left(\sum_{j=1}^k c_{\Theta_j}^2\right)^{-1}.
    \]
    Therefore, by \cite[Theorem 4.4]{gerhold2021dilations}, we may conclude that
    \[
        C_d\geq c_{\Theta} \geq \sqrt{\sum_{j=1}^k c_{\Theta_j}^2}.
    \]
\end{proof}

The aforementioned lower bound of $C_2 \gtrsim 1.5438$ is the value $c_{\theta_s}$ where $\theta_s= 2\pi(\sqrt{2}-1)$ is the so-called silver mean \cite[Example 7.3]{gerhold2022dilations}. As discussed within \cite{gerhold2022dilations}, a numerical computation shows that $c_{\theta_s}\approx 1.5437772$. Similarly, it was found in \cite[Corollary 4.7]{gerhold2021dilations} that $C_3\gtrsim 1.858$. Consequently, for all $d\geq 2$, Theorem \ref{t:uni-dil-lower} immediately implies that
\[
    C_d \gtrsim 1.0916\sqrt{d} \qquad \text{or} \qquad C_d \gtrsim \sqrt{1.1916d - 0.1227},
\]
depending on whether $d$ is even or odd, respectively. Indeed, the even case follows from decomposing $d$ into a sum of twos, whereas the odd case follows from decomposing $d$ into a sum of twos and a single three. This gives a strict improvement to the previous lower bound of $\sqrt{d}$ on $C_d$ for every $d\geq4$. Furthermore, for $d\geq 4$, this lower bound obstructs the possibility of improving the bounds on the Haar dilation constant $H_d$ from gaining direct insight on $C_d$, contrary to the circumstance for $d=2$ (Corollary \ref{c:haar-comm-dil-constant}). Indeed, using \cite[Theorem 3.10]{gerhold2021dilations}, one simply observes that
\[
    H_d \leq 2\sqrt{1-\frac{1}{2d}} < 1.0916\sqrt{d} \lesssim C_d, \qquad d\geq 4.
\]

\vspace{6pt}
\textbf{Acknowledgments.}
The author is grateful to Malte Gerhold, Orr Shalit, and to Adam Dor-On, for comments that have had a meaningful impact on this work. This material is based upon work supported by the Swedish Research Council under grant no.~ 2021-06594 while the author was in attendance at Institut Mittag-Leffler in Djursholm, Sweden during the program ``Operator Algebras and Quantum Information" in Spring of 2026.

\appendix
\section{Optimization of parameters for Theorem \ref{t:uni-comm-dil-constant}}\label{s:optimization}

This appendix provides information that justifies the choice of constants found in the proof of Theorem \ref{t:uni-comm-dil-constant}. This is intended to highlight the degree to which the parameters have been selected in an optimal way. As the strategy involved is somewhat technical, we provide an additional summary once all of the details have been recorded.

To this end, we first highlight the basic strategy of Theorem \ref{t:uni-comm-dil-constant}. Given a choice of constants $0<r<p<1$, we construct some unit vector $\xi = \sum_{m\in\bZ}\delta_m\otimes\xi_m$ in $\ell^2(\bZ)\otimes\K$ and a unitary operator $S\in B(\K)$ on some Hilbert space. By Proposition \ref{p:univ-uni-compress}, this gives us access to a unital completely positive map $\Phi_1:\rC(\bT^2)\rightarrow\rC^*(\bF_2)$ that is a slice of the commuting unitaries from Proposition \ref{p:cocyc-uni-comm-uni}. Recall that
\[
    \Phi_1(z) = \left( \sum_{m\in\bZ}\langle \xi_m, \xi_{m+1}\rangle \right)\mathfrak{u}_a \quad \text{and} \quad \Phi_1(w) = \sum_{m\in\bZ}\langle S\xi_m, \xi_m\rangle \mathfrak{u}_{a^mba^{-m}}.
\]
Upon averaging $\Phi_1$ across generators, we obtain the desired map $\Phi:\rC(\bT^2)\rightarrow\rC^*(\bF_2)$. The results of this appendix describe how the unit vector $\xi$ and the unitary $S$ from Proposition \ref{p:univ-uni-compress} are selected to be largely optimal, thereby justifying our choice of construction.

First note that, in the proof of Theorem \ref{t:uni-comm-dil-constant}, the vectors $\{\xi_m: m\neq 0\}$ are scalar multiples of a fixed unit vector $v\in\K$ satisfying $\langle Sv, v\rangle = 0$. The latter constraint is central to being able to meaningfully apply Proposition \ref{p:univ-uni-compress}, as this guarantees that $\Phi_1(w)$ is a scalar multiple of $\mathfrak{u}_b$. In which case, the resulting scalar coefficient of $\Phi_1(w)$ is $\langle S\xi_0, \xi_0\rangle$. The following describes how to select $S$, $\xi_0$, and $v$ so as to maximize $A = |\langle \xi_0, v\rangle|$, relative to the scalar values $r = |\langle S\xi_0, \xi_0\rangle|$ and $p= \|\xi_0\|^2$. This thereby maximizes the quantity $|\langle\xi_0,\xi_1\rangle|$ that appears in $\Phi_1(w)$.

\begin{proposition}\label{p:optimize-1}
    Let $S\in B(\K)$ be a unitary operator on some Hilbert space $\K$. Suppose that $v\in\K$ is a unit vector such that $\langle Sv, v\rangle = 0$. If $0<r<p<1$ are constants and $\eta\in\K$ is such that $|\langle S\eta, \eta\rangle| = r$ and $\|\eta\|^2 = p$, then
    \begin{equation}\label{eq:optimize-1}
        |\langle \eta, v\rangle| \leq \frac{\sqrt{p+r}+\sqrt{p-r}}{2}.
    \end{equation}
    Moreover, the upper bound is attained when $\K = \bC^2$, $S\in B(\K)$ is defined by $Se_1 = e_1$ and $Se_2 = -e_2$, $v = \frac{e_1+e_2}{\sqrt{2}}$, and $\eta = \sqrt{\frac{p+r}{2}}e_1 + \sqrt{\frac{p-r}{2}}e_2$.
\end{proposition}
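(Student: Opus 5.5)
The plan is to reduce everything to a $2\times 2$ compression of $S$ and then do an explicit extremal computation. Write $c=\langle \eta,v\rangle$ and decompose $\eta = cv+u$ with $u\perp v$. Set $x=|c|$ and $y=\|u\|$, so that $x^2+y^2=p$. If $u=0$ then $\langle S\eta,\eta\rangle=|c|^2\langle Sv,v\rangle=0\neq r$, so we may assume $y>0$. Let $\mathcal{L}=\mathrm{span}\{v,u/y\}$ and let $T=P_{\mathcal L}S|_{\mathcal L}$. In the orthonormal basis $v, u/y$ this is a contraction
\[
T=\begin{bmatrix} 0 & b\\ c' & d\end{bmatrix},
\]
where the $(1,1)$ entry vanishes because $\langle Sv,v\rangle=0$. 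Since $\eta\in\mathcal L$, we have $\langle S\eta,\eta\rangle=\langle T\eta,\eta\rangle$, and hence
\[
r\le xy(|b|+|c'|)+y^2|d|.
\]

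The key input is the constraint on a contraction with a zero corner. By Parrott's theorem (equivalently, by the explicit description of $2\times2$ contractive completions), such a matrix with $|b|,|c'|\le1$ satisfies
\[
|d|\le\sqrt{1-|b|^2}\,\sqrt{1-|c'|^2}.
\]
By the AM--GM inequality in the form $\sqrt{(1-\beta^2)(1-\gamma^2)}\le 1-\left(\tfrac{\beta+\gamma}{2}\right)^2$, we may replace $|b|$ and $|c'|$ by their mean $s\in[0,1]$. This gives
\[
r\le f(s):=2xys+y^2(1-s^2).
\]

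Next I would maximize $f$ over $s\in[0,1]$. The function $f$ is a concave quadratic with critical point $s=x/y$, so there are two cases.
\begin{itemize}
\item If $x\le y$, then $x^2\le p/2$. This is already at most $\tfrac{p+\sqrt{p^2-r^2}}{2}$, which equals the square of the right-hand side of \eqref{eq:optimize-1}.
\item If $x>y$, the maximum is attained at $s=1$, so $r\le 2xy=2x\sqrt{p-x^2}$. Squaring gives $4x^4-4px^2+r^2\le0$, hence $x^2\le\frac{p+\sqrt{p^2-r^2}}{2}$.
\end{itemize}
It remains to check the identity
\[
\left(\frac{\sqrt{p+r}+\sqrt{p-r}}{2}\right)^2=\frac{p+\sqrt{p^2-r^2}}{2},
\]
which is a direct expansion. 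This yields \eqref{eq:optimize-1} in both cases.

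The attainment claim is a direct computation with the stated $S$, $v$ and $\eta$. We have $\langle Sv,v\rangle=\tfrac12-\tfrac12=0$ and $\|\eta\|^2=p$. We also have $\langle S\eta,\eta\rangle=\tfrac{p+r}{2}-\tfrac{p-r}{2}=r$. Finally,
\[
\langle\eta,v\rangle=\frac{1}{\sqrt2}\left(\sqrt{\tfrac{p+r}{2}}+\sqrt{\tfrac{p-r}{2}}\right)=\frac{\sqrt{p+r}+\sqrt{p-r}}{2}.
\]

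The main obstacle I expect is justifying the sharp bound on $|d|$ for a contraction with a zero corner. The naive row and column bounds $|b|^2+|d|^2\le1$ and $|c'|^2+|d|^2\le1$ are too weak: they only give $r\le y\sqrt{4x^2+y^2}$, which does not reproduce the equality case. If I want to avoid citing Parrott, I would derive the bound directly from $I-T^*T\ge0$. Its $(1,1)$ entry is $1-|c'|^2$, its $(2,2)$ entry is $1-|b|^2-|d|^2$, and its off-diagonal entry is $-\bar c' d$. The $2\times2$ determinant condition is then $(1-|c'|^2)(1-|b|^2-|d|^2)\ge|c'|^2|d|^2$, which rearranges to exactly $|d|^2\le(1-|b|^2)(1-|c'|^2)$.
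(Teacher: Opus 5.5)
Your proof is correct, but it takes a genuinely different route from the paper's. I checked each step: the $2\times 2$ compression with zero $(1,1)$ entry, your determinant derivation of $|d|^2\le(1-|b|^2)(1-|c'|^2)$, the inequality $\sqrt{(1-\beta^2)(1-\gamma^2)}\le 1-(\tfrac{\beta+\gamma}{2})^2$ (which holds by concavity of $t\mapsto\log(1-t^2)$), and the two cases $x\le y$ and $x>y$.

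The paper never reduces to a $2\times 2$ block. It first rotates $S$ by a unimodular scalar so that $\mathrm{Re}\langle S\eta,\eta\rangle=r$, and sets $B=\tfrac12(I+\mathrm{Re}\,S)$. It then notes that $\langle Bv,v\rangle=\tfrac12$ and $\langle B\eta,\eta\rangle=\tfrac{p+r}{2}$. Finally it splits $\langle\eta,v\rangle=\langle B\eta,v\rangle+\langle(I-B)\eta,v\rangle$ and applies Cauchy--Schwarz in the semi-inner products given by $B$ and $I-B$. This gives $\tfrac{\sqrt{p+r}+\sqrt{p-r}}{2}$ in one line, with no case analysis and no symmetrization step.

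The paper's argument buys brevity and a little generality. It only uses $-I\le\mathrm{Re}(zS)\le I$ for $z\in\bT$, that is, numerical radius at most one. Your argument uses that the compression of $S$ is a norm contraction.

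Your argument buys a clear view of the extremal case. Equality forces $x>y$ and $s=1$, hence $|b|=|c'|=1$ and $d=0$. Since $\|Sv\|=\|Se\|=1$ (with $e$ the unit vector $u/\|u\|$ completing your orthonormal basis $v,e$), the span of $v$ and $\eta$ then reduces $S$. On it, $S$ acts as an off-diagonal unimodular matrix, which is unitarily equivalent to a unimodular multiple of $\mathrm{diag}(1,-1)$. This explains why the $\bC^2$ example in the statement is the model extremizer, not just one instance of attainment.
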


\begin{proof}
    It is straightforward to see that the right-hand side of equation (\ref{eq:optimize-1}) is attained on $\K=\bC^2$ by the prescribed vectors $\eta, v$ and unitary $S$. We verify that this is indeed the maximal value for $|\langle\eta,v\rangle|$. For clarity, we consider the unit vector $\eta_0 = \frac{1}{\sqrt{p}}\eta$. Upon replacing $S$ by $zS$ for some $z\in\bT$, we may assume that
    \[
        \text{Re}(\langle S \eta_0, \eta_0\rangle) = \frac{r}{p}.
    \]
    In which case, we then have that
    \[
        \left\langle  (I+\text{Re}S)v, v \right\rangle = 1 \quad \text{and} \quad \left\langle (I+\text{Re}S)\eta_0, \eta_0 \right\rangle = \frac{p+r}{p}.
    \]
    Set $B=\frac{1}{2}(I+\text{Re}S)$, which is a positive contraction. Then, note that
    \begin{align*}
        |\langle\eta_0, v\rangle| & \leq |\langle B^{1/2} \eta_0,  B^{1/2}v\rangle| + |\langle (I-B)^{1/2}\eta_0, (I-B)^{1/2} v\rangle|\\
        & \leq \langle B\eta_0, \eta_0\rangle^{1/2} \langle Bv, v\rangle^{1/2} + \langle (I-B) \eta_0, \eta_0\rangle^{1/2}\langle (I-B)v, v\rangle^{1/2}\\
        & = \frac{\sqrt{p+r} + \sqrt{p-r}}{2\sqrt{p}}.
    \end{align*}
    From which, we have that $|\langle \eta, v\rangle| \leq \frac{1}{2}(\sqrt{p+r}+\sqrt{p-r})$.
\end{proof}

In the next step, we describe how the unit vector $\xi$ has been selected to be somewhat optimal from the given constants $p$ and $r$, as well as how $p$ and $r$ have been selected to produce an optimal output for $\Phi_1$. In accordance with Proposition \ref{p:optimize-1}, we assume the vectors $\{\xi_m: m\neq0\}$ to all be scalar multiples of the vector $v=\frac{e_1+e_2}{\sqrt{2}}$ in $\K=\bC^2$. In which case, we now maximize the scalar value that arises from $\Phi_1(z)$ relative to $q = \frac{1-p}{2}$ and
\[
     A= \langle \xi_0, v\rangle = \left\langle \sqrt{\frac{p+r}{2}}e_1 + \sqrt{\frac{p-r}{2}}e_2, \frac{e_1+e_2}{\sqrt{2}}\right\rangle = \frac{\sqrt{p+r}+\sqrt{p-r}}{2}.
\]
After the proof, we address the appearance of the constant $q$. We thank Malte Gerhold for the proof, which replaces a more complicated argument of the author.

\begin{proposition}\label{p:optimize-2}
    Fix constants $A,q>0$ and let $\F_q\subseteq\ell^2(\bN)$ be those $\alpha = (\alpha_m)_{m\geq 1}\in\ell^2(\bN)$ such that $\|\alpha\|^2 = q$. Define a function on $\F_q$ by
    \[
        T_A(\alpha) = A\alpha_1 + \sum_{m\geq1}\alpha_m\alpha_{m+1}.
    \]
    Then, we have $\sup_{\alpha\in\F_q}|T_A(\alpha)| = \sqrt{q(A^2+q)}$ with the supremum being attained at
    \[
        \alpha_m=\sqrt{\frac{A^2q}{A^2+q}}\,\left( \frac{q}{A^2+q}\right)^{\frac{m-1}{2}}, \qquad m\geq 1.
    \]
\end{proposition}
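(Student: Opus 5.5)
We want to maximize $T_A(\alpha)=A\alpha_1+\sum_{m\geq1}\alpha_m\alpha_{m+1}$ over sequences with $\|\alpha\|^2=q$. The plan is to get a quadratic-form upper bound via a weighted AM--GM (Cauchy--Schwarz) inequality applied term by term, and then check that the proposed geometric sequence attains it.

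\textbf{Reduction and notation.} Since $|T_A(\alpha)|\leq T_A(|\alpha|)$ with $|\alpha|=(|\alpha_m|)_m$ still in $\F_q$, we may assume every $\alpha_m\geq 0$. Put $\rho^2=\frac{q}{A^2+q}$ and $c=\sqrt{q(A^2+q)}$, so that $A=\sqrt{q(1-\rho^2)}/\rho$. The target is $T_A(\alpha)\leq c$.

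\textbf{The upper bound.} For each product $\alpha_m\alpha_{m+1}$ use
\[
\alpha_m\alpha_{m+1}\leq \tfrac12\left(\rho\,\alpha_m^2+\rho^{-1}\alpha_{m+1}^2\right),
\]
and for the linear term use
\[
A\alpha_1\leq \tfrac12\left(t A^2+t^{-1}\alpha_1^2\right)
\]
with a parameter $t>0$ still to be chosen. Summing these, the coefficient of $\alpha_1^2$ is $\frac12(\rho+t^{-1})$ and the coefficient of $\alpha_m^2$ for $m\geq 2$ is $\frac12(\rho+\rho^{-1})$.

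Choose $t$ so that $t^{-1}=\rho^{-1}$, i.e.\ $t=\rho$. Then all coefficients equal $\frac12(\rho+\rho^{-1})$, which gives
\[
T_A(\alpha)\leq \tfrac12\rho A^2+\tfrac12(\rho+\rho^{-1})q .
\]
Substituting $\rho^2=q/(A^2+q)$, this should simplify to
\[
\frac{\rho A^2+\rho q}{2}+\frac{q}{2\rho}=\frac{\rho(A^2+q)}{2}+\frac{A^2+q}{2}\rho=\rho(A^2+q)=\sqrt{q(A^2+q)},
\]
using $q/\rho=\rho(A^2+q)$. This gives the bound $c$.

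\textbf{Attainment.} Equality in each AM--GM step requires $\alpha_{m+1}=\rho\,\alpha_m$ and $\alpha_1=tA=\rho A$. The first condition makes $\alpha$ geometric with ratio $\rho$. The second gives $\alpha_1=\rho A=\sqrt{A^2q/(A^2+q)}$, which is exactly the stated formula. The norm check is then
\[
\|\alpha\|^2=\frac{\alpha_1^2}{1-\rho^2}=\frac{\rho^2A^2(A^2+q)}{A^2}=q,
\]
so this $\alpha$ lies in $\F_q$ and attains the supremum.

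\textbf{Main obstacle.} The only delicate point is choosing the weights so that the $\alpha_1^2$ coefficient matches the others. Weighting the linear term by the same $\rho$ resolves this, and the remaining work is the algebra above.
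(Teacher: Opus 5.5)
Your proof is correct, but it takes a different route from the paper. The paper writes $T_A(\alpha)=A\alpha_1+\langle \alpha,S\alpha\rangle$ with $S$ the backward shift and applies Cauchy--Schwarz twice. First, $\langle\alpha,S\alpha\rangle\le\|\alpha\|\|S\alpha\|=\sqrt{q}\sqrt{q-\alpha_1^2}$ reduces the infinite-dimensional problem to one in the single variable $\alpha_1$. Second, a Cauchy--Schwarz in $\mathbb{R}^2$ pairs $(A\sqrt{q},q)$ with the unit vector $(\alpha_1/\sqrt{q},\sqrt{1-\alpha_1^2/q})$. That argument \emph{derives} the answer: the geometric form of the optimizer comes from the first equality case ($\alpha$ an eigenvector of $S$), and $\alpha_1$ and the ratio come from the second, with nothing supplied in advance.

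Your termwise weighted AM--GM instead gives a single quadratic certificate
\[
T_A(\alpha)\le \tfrac{\rho}{2}A^2+\tfrac{\rho+\rho^{-1}}{2}\|\alpha\|^2 ,
\]
valid for every real $\alpha\in\ell^2(\mathbb{N})$ and every $\rho>0$, with $\tfrac12(\rho+\rho^{-1})$ acting as a Lagrange multiplier. The upper bound and the attainment then follow from one set of equality conditions, which is slightly more economical than the paper's two-stage bookkeeping. The cost is that you must supply the ratio $\rho$ yourself. This is not really a guess, though: minimizing the right-hand side over $\rho$ at $\|\alpha\|^2=q$ gives exactly $\rho^2=q/(A^2+q)$, so your method can also find the optimum rather than only verify it.
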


\begin{proof}
    First note that we may assume without loss of generality that $\alpha$ has non-negative entries. Then, let $S\in B(\ell^2(\bN))$ be the backward shift and observe that
    \[
        T_A(\alpha) = A\alpha_1 + \langle \alpha, S\alpha\rangle.
    \]
    By the Cauchy-Schwarz inequality, we have that
    \[
        T_A(\alpha) = A \alpha_1 + \langle \alpha, S\alpha\rangle \leq A \alpha_1 + \|\alpha\| \|S\alpha\| = A \alpha_1 + q\sqrt{1-\frac{\alpha_1^2}{q}},
    \]
    with equality whenever $\alpha$ is an eigenvector for $S$. When $\alpha$ is indeed an eigenvector for $S$, we necessarily have that $S\alpha = \rho\alpha$ where
    \[
        \rho = \sqrt{1-\frac{\alpha_1^2}{q}},
    \]
    and that $\alpha_m = \alpha_1 \rho^{m-1}$ for every $m\geq 1$. Note that $\begin{bmatrix}\frac{\alpha_1}{\sqrt{q}} & \rho\end{bmatrix}^T$ is a unit vector as $\alpha\in\F_q$. In turn, applying the Cauchy-Schwarz inequality again, we see that 
    \[
        T_A(\alpha)\leq A\alpha_1 + q\rho = \left\langle \begin{bmatrix}  A\sqrt{q}\\ q\end{bmatrix}, \begin{bmatrix}  \frac{\alpha_1}{\sqrt{q}}\\ \rho\end{bmatrix} \right\rangle \leq \sqrt{q(A^2+q)},
    \]
    with equality when
    \[
        \frac{\alpha_1}{\sqrt{q}} = \frac{A}{\sqrt{A^2+q}} \quad \text{and} \quad \rho = \sqrt{\frac{q}{A^2+q}}.
    \]
    It then follows that
    \[
        \alpha_1 = \sqrt{\frac{A^2q}{A^2+q}}.
    \]
    Therefore, we may conclude that
    \[
        \alpha_m = \alpha_1 \rho^{m-1}  = \sqrt{\frac{A^2q}{A^2+q}}\left( \frac{q}{A^2+q}\right)^{\frac{m-1}{2}}, \qquad m\geq 1.
    \]
\end{proof}

We remark that, in the proof of Theorem \ref{t:uni-comm-dil-constant}, the vectors $\{\xi_m: m\neq 0\}$ are assumed to satisfy $\xi_m = \xi_{-m}$ for each non-zero $m\in\bZ$. This is the setting where Proposition \ref{p:optimize-2} is immediately applicable. However, this assumption is of no loss for our purposes. Indeed, otherwise set $q_+ = \sum_{m\geq 1}|\alpha_m|^2$ and $q_- = \sum_{m\leq -1} |\alpha_m|^2$. By Proposition \ref{p:optimize-2}, the sum of the two quantities is bounded above by $\sqrt{q_+(A^2+q_+)}+\sqrt{q_-(A^2+q_-)}$. Now, the Cauchy-Schwarz inequality implies that
\[
    \left(\sqrt{q_+(A^2+q_+)}+\sqrt{q_-(A^2+q_-)}\right)^2 \leq (q_+ + q_-)\left((A^2+q_+) + (A^2+q_-)\right)
\]
with equality if $q_+ = q_-$. As $q_+ + q_- = 1-p$, the maximum then occurs when $q_+ = q_- = \frac{1-p}{2}$. Thus, it is sufficient to assume $\xi_m = \xi_{-m}$ for all non-zero $m\in\bZ$.

At this stage, we have now parametrized an optimal choice of unit vector $\xi$ and unitary $S$ in terms of $p$ and $r$. Indeed, the problem is reduced to maximizing the value of the final constant $\frac{r+\widetilde{r}}{2}$ in the proof of Theorem \ref{t:uni-comm-dil-constant}. This is because, in the proof of Theorem \ref{t:uni-comm-dil-constant}, it was found that
\[
    \widetilde{r} =  2\sqrt{q\left(A^2 + q \right)} = (1-p)\left(\frac{1-p}{2} + \left( \frac{\sqrt{p+r}+\sqrt{p-r}}{2} \right)^2 \right)^{1/2}.
\]
From this expression, we obtain that
\[
    \frac{1}{2}(r+\widetilde{r}) = \frac{1}{2}\left[r+\sqrt{(1-p)\bigl(1+\sqrt{p^2-r^2}\bigr)}\right].
\]
The final step is now to optimize this quantity over an appropriate choice of domain.

\begin{proposition}\label{p:optimize-3}
    For $0<r<p<1$, define
    \[
        t(p,r)=\frac{1}{2}\left[r+\sqrt{(1-p)\bigl(1+\sqrt{p^2-r^2}\bigr)}\right].
    \]
    Then, we have $\sup_{0<r<p<1} t(p,r)=\frac{\sqrt\phi}{2}$ with the supremum being attained at
    \[
        (p,r) =\left(\frac{5+\sqrt5}{10}, \sqrt{\frac{2(\sqrt5-1)}{5}}\right).
    \]
\end{proposition}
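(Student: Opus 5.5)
We maximize $t(p,r)$ on the open region $0<r<p<1$ by substituting variables that remove the nested radical, reducing to a one-variable problem, and solving it explicitly.

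\textbf{Substitution.} Set $s=\sqrt{p^2-r^2}\in(0,p)$, so that $r=\sqrt{p^2-s^2}$ and
\[
2t = \sqrt{p^2-s^2}+\sqrt{(1-p)(1+s)}.
\]
A cleaner parametrization is $p$ together with an angle: write $r=p\sin\theta$ and $s=p\cos\theta$ with $\theta\in(0,\pi/2)$.

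\textbf{Inner maximization over $r$ for fixed $p$.} Differentiating in $s$ gives the critical-point condition
\[
\frac{s}{\sqrt{p^2-s^2}} = \frac{\sqrt{1-p}}{2\sqrt{1+s}}.
\]
The function $f(s)=\sqrt{p^2-s^2}+\sqrt{(1-p)(1+s)}$ is strictly concave in $s$, being a sum of concave functions. So this critical point, if it is interior, gives the global maximum over $s$. Note that $f'(0)>0$ and $f'(s)\to-\infty$ as $s\to p$, so an interior critical point always exists.

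\textbf{Joint critical point.} It is simplest to impose the full first-order system $\partial_p(2t)=0$ and $\partial_r(2t)=0$ directly:
\[
1 = \frac{\sqrt{1-p}\,r}{2s\sqrt{1+s}},
\qquad
\sqrt{1+s}\,\Bigl(-\tfrac{1}{2\sqrt{1-p}}\Bigr)+\frac{\sqrt{1-p}}{2\sqrt{1+s}}\cdot\frac{p}{s}=0.
\]
The second equation simplifies to $(1-p)p = s(1+s)$. The first gives $r^2(1-p)=4s^2(1+s)$; substituting $r^2=p^2-s^2$ yields
\[
(p^2-s^2)(1-p)=4s^2(1+s).
\]
Eliminate $p$ using $1-p = s(1+s)/p$. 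Dividing by $s$ gives $(p^2-s^2)(1+s)=4sp(1+s)$, hence $p^2-4sp-s^2=0$, so
\[
p=(2+\sqrt5)\,s.
\]
Plugging this into $(1-p)p=s(1+s)$ gives
\[
s = \frac{1-(2+\sqrt5)s}{2+\sqrt5}\cdot\frac{1}{1+s}\cdot s\,(\ldots),
\]
which is a linear equation in $s$ after cancelling. Its solution is $s=\frac{1}{(2+\sqrt5)+(2+\sqrt5)^{-1}}\cdot(\text{const})$. I expect this to give $p=\frac{5+\sqrt5}{10}$, and I would check directly that $p^2-r^2=s^2$ holds with the stated $r$. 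Substituting back, and using the evaluation already done in the proof of Theorem~\ref{t:uni-comm-dil-constant}, confirms that $t$ equals $\frac{\sqrt\phi}{2}$ there.

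\textbf{Global maximality.} This is the main obstacle, since $t$ is not obviously jointly concave. I would proceed in two steps.

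1. Check the boundary. As $r\to p$, we get $t\to\frac12\bigl[p+\sqrt{1-p}\bigr]$, whose maximum is $\frac12\cdot\frac54=\frac58<\frac{\sqrt\phi}{2}\approx0.636$. As $p\to1$ or $r\to0$, the values are also smaller: for example, when $r=0$, $t=\frac12\sqrt{1-p^2}\le\frac12$.

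2. Show uniqueness of the critical point. The elimination above yields a unique solution, so the supremum is attained at the interior critical point.

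Alternatively, to avoid this analysis, I would bound $t$ via Cauchy--Schwarz. Writing
\[
r+\sqrt{(1-p)(1+s)}\le\sqrt{(1+\lambda)\bigl(r^2+\cdots\bigr)}
\]
with a weight $\lambda$ tuned to the optimum would give an inequality-based proof that mirrors Proposition~\ref{p:optimize-2}.
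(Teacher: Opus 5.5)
Your proposal is correct and follows essentially the same route as the paper. Both arguments use the substitution $s=\sqrt{p^2-r^2}$ and a first-order system that reduces to $p(1-p)=s(1+s)$ and $p=(2+\sqrt5)s$, giving a unique interior critical point. Both then compare with the boundary values of the continuous extension to the closed triangle, which are at most $\tfrac58<\tfrac{\sqrt\phi}{2}$.

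The one step you left garbled is routine. With $k=2+\sqrt5$, the relation $p(1-p)=s(1+s)$ becomes $k(1-ks)=1+s$, so $s=\frac{k-1}{k^2+1}=\frac{3\sqrt5-5}{10}$ and $p=ks=\frac{5+\sqrt5}{10}$, as required.
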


\begin{proof}
    For $0<s<p<1$, define
    \[
        T(p,s)=\sqrt{p^2-s^2}+\sqrt{(1-p)(1+s)}.
    \]
    Observe that
    \[
        \sup_{0<r<p<1} t(p,r) = \sup_{(p,s)\in D} \frac{1}{2}T(p,s)
    \]
    where we set $D=\{(p,s):0<s<p<1\}$. As such, we will show that
    \[
        \sup_{(p,s)\in D} T(p,s)=\sqrt{\frac{1+\sqrt5}{2}} = \sqrt{\phi}.
    \]

    Letting
    \[
        T_1(p,s)=\sqrt{p^2-s^2} \quad \text{and} \quad T_2(p,s)=\sqrt{(1-p)(1+s)},
    \]
    we have $T(p,s) = T_1(p,s) + T_2(p,s)$. Note that
    \[
        \left(\frac{\partial T}{\partial p},\frac{\partial T}{\partial s}\right)=\left(\frac{p}{T_1}-\frac{1+s}{2T_2}, -\frac{s}{T_1}+\frac{1-p}{2T_2}\right).
    \]
    For a critical point $(p,s)$ in the interior, we must have that
    \[
        \frac{p}{T_1}=\frac{1+s}{2T_2} \quad \text{and} \quad \frac{s}{T_1}=\frac{1-p}{2T_2}
    \]
    and that $p(1-p)=s(1+s)$. Upon solving the equations, one finds that $\frac{p}{s} = 2+\sqrt{5}$ on account of having that $0<s<p <1$. Whence, we see that
    \[
        (p_0, s_0) = \left(\frac{5+\sqrt{5}}{10}, \frac{3\sqrt{5}-5}{10}\right)
    \]
    is the unique critical point in the interior. Consequently,
    \[
        r_0 = \sqrt{p_0^2-s_0^2} = \sqrt{\frac{2(\sqrt{5}-1)}{5}}.
    \]
    and, upon evaluating against $(p_0,s_0)$, this reveals that
    \[
        T(p_0,s_0) = \sqrt{\frac{1+\sqrt{5}}{2}} = \sqrt{\phi}.
    \]

    It remains to check that the critical point in the interior is the global maximum. For this, note that $T$ extends continuously to the closure $\ol{D}=\{(p,s):0\leq s\leq p\leq1\}$. As such, it remains to consider the (topological) boundary of $\ol{D}$. Here, we arrive at a few cases:
    \begin{enumerate}[{\rm (i)}]
        \item If $p=0$, then $s=0$ and $T(0,0)= 1$.\vspace{1.5mm}
        \item If $p=1$, then $T(1,s) = \sqrt{1-s^2}\leq 1$.\vspace{1.5mm}
        \item If $s=p$, then $T(p,p) = \sqrt{1-p^2}\leq 1$.\vspace{1.5mm}
        \item If $s=0$, we find that $T(p,0) = p+\sqrt{1-p}\leq \frac{5}{4}$.\vspace{1.5mm}
    \end{enumerate}
    As $\sqrt{\phi}$ exceeds these values, the global maximum for $T$ is attained at the critical point $(p_0,s_0)$ and, consequently,
    \[
        \max_{0<r<p<1} t(p,r) = \frac{\sqrt{\phi}}{2}.
    \]
\end{proof}

We provide a final synopsis of the extent to which these arguments collectively demonstrate the strength of our choices in Theorem \ref{t:uni-comm-dil-constant}, given the construction obtained from Proposition \ref{p:univ-uni-compress}. To this end, consider a unitary operator $S\in B(\K)$ and a unit vector $\xi = \sum_{m\in\bZ}\delta_m\otimes\xi_m$ in $\ell^2(\bZ)\otimes\K$. From Proposition \ref{p:univ-uni-compress}, we obtain a unital completely positive map $\Phi_1:\rC(\bT^2)\rightarrow\rC^*(\bF_2)$ satisfying
\begin{equation}\label{eq:app}
    \Phi_1(z) = \left( \sum_{m\in\bZ}\langle \xi_m, \xi_{m+1}\rangle \right)\mathfrak{u}_a \quad \text{and} \quad \Phi_1(w) = \sum_{m\in\bZ}\langle S\xi_m, \xi_m\rangle \mathfrak{u}_{a^mba^{-m}}.
\end{equation}
To guarantee that $\Phi_1$ maps $w$ to a scalar multiple of $\mathfrak{u}_b$, we impose that $\langle S\xi_m, \xi_m\rangle = 0$ for every non-zero $m\in\bZ$ and, in addition, that $\{\xi_m: m\neq 0\}$ are scalar multiples of one another. Now, without loss of generality, we may assume that $\xi_m = \xi_{-m}$ for each non-zero $m\in\bZ$. Then, consider the constants $r=|\langle S\xi_0, \xi_0\rangle|$ and $p= \|\xi_0\|^2$, which may be assumed to satisfy $0<r < p< 1$. By Proposition \ref{p:optimize-1}, we find that the largest value of $|\langle \xi_0, v\rangle|$, where $v\in\K$ is a unit vector with $\langle Sv, v\rangle =0$, is found by selecting a two-dimensional diagonal matrix $S$ together with the vectors $v= \frac{e_1+e_2}{\sqrt{2}}$ and $\xi_0 = \sqrt{\frac{p+r}{2}}e_1 + \sqrt{\frac{p-r}{2}}e_2$. At this stage, we have that $\xi_m = \alpha_{|m|}v$ for some scalar values $\{\alpha_m : m\geq 1\}$. Proposition \ref{p:optimize-2} demonstrates how to select the scalar values $\{\alpha_m : m\geq 1\}$ so as to return an optimal result for the scalar value associated with $\Phi_1(z)$. The maximal output that can then be extracted from the average of the scalar values in equation (\ref{eq:app}) coincides with the solution to the optimization problem
\[
    \frac{1}{2}\left[ r+ \sqrt{(1-p)(1+\sqrt{p^2-r^2})} \right], \qquad 0<r<p<1.
\]
Finally, Proposition \ref{p:optimize-3} gives the answer to this problem.

\section{Asymptotic bounds on the dilation constant}\label{s:recursion}

The goal of this appendix is to complete the proof of Theorem \ref{t:asy-bound}, which gives an asymptotic upper bound on the universal commuting dilation constant. To this end, we define
\[
    A(p,r) = \sqrt{(1-p)(1+\sqrt{p^2-r^2})}, \qquad 0\leq r\leq p\leq 1,
\]
and set $M(r) = \sup_{r\leq p\leq 1} A(p,r)$ for $0\leq r\leq 1$. Then, we define a sequence
\[
    \gamma_{d+1} = \sup_{0\leq r \leq 1}\frac{M(r) + dr\gamma_d}{d+1}, \qquad \gamma_1=1.
\]
The goal of this appendix is to prove that
\[
    \frac{1}{\gamma_d} = \sqrt{2d-\log d + O(1)} \quad \quad \text{as $d\rightarrow\infty$}.
\]
We start with a couple observations.

\begin{lemma}\label{l:asy-1}
    The following statements hold.
    \begin{enumerate}[{\rm (i)}]
        \item We have that
        \[
            M(1-s) = s^{1/2}+\frac{1}{4}s^{3/2} + O(s^{5/2}) \quad \text{as $s\rightarrow0^+$}.
        \]
        \item We have that $\gamma_d^{-2} = O(d)$ as $d\rightarrow\infty$.
    \end{enumerate}
\end{lemma}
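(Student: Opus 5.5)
For (ii) I will rerun the induction from the proof of Theorem~\ref{t:asy-bound}(i), now applied to the recursion defining $\gamma_d$ rather than to $C_d$. For (i) I will localise the maximiser defining $M(1-s)$ and then Taylor expand.

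\emph{Part (ii).} I will show by induction that $\gamma_d\geq \frac{1}{\sqrt{2d}}$ for all $d\geq1$; this gives $\gamma_d^{-2}\leq 2d=O(d)$. The base case is $\gamma_1=1\geq \frac{1}{\sqrt2}$. For the inductive step I take $r=1-\frac{1}{2d}$ in the supremum defining $\gamma_{d+1}$. Choosing $p=r$ gives
\[
M(r)\geq A(r,r)=\sqrt{1-r}=\tfrac{1}{\sqrt{2d}}.
\]
With the inductive hypothesis this yields
\[
(d+1)\gamma_{d+1}\geq \tfrac{1}{\sqrt{2d}}+d\bigl(1-\tfrac{1}{2d}\bigr)\tfrac{1}{\sqrt{2d}}=\tfrac{2d+1}{2\sqrt{2d}}.
\]
The elementary inequality $\frac{2d+1}{2\sqrt{2d}}\geq\sqrt{\frac{d+1}{2}}$ is exactly the one used in Theorem~\ref{t:asy-bound}(i); squaring, it reduces to $(2d+1)^2\geq 4d(d+1)$. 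It closes the induction.

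\emph{Part (i).} Write $r=1-s$ and $p=r+x$ with $0\leq x\leq s$. Then
\[
M(1-s)^2=\sup_{0\leq x\leq s} f(x),\qquad f(x)=(s-x)\bigl(1+\sqrt{x(2-2s+x)}\bigr).
\]
The first step is to localise the maximiser. The candidate $x=0$ gives $f=s$, and for $x\ge0$ we have
\[
f(x)\leq s-x+s\sqrt{3x}.
\]
This bound is less than $s$ as soon as $x>3s^2$. So the supremum may be restricted to $x\leq 3s^2$, and I substitute $x=s^2y$ with $y\in[0,3]$.

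In this range $\sqrt{x(2-2s+x)}=s\sqrt{2y}\,(1+O(s))$ uniformly in $y$. Hence
\[
f=s\bigl(1-sy\bigr)\bigl(1+s\sqrt{2y}+O(s^2)\bigr)=s\bigl[1+s(\sqrt{2y}-y)+O(s^2)\bigr],
\]
with the $O(s^2)$ uniform on $y\in[0,3]$. The function $\sqrt{2y}-y$ is maximised at $y=\frac12$, where it equals $\frac12$. Therefore
\[
M(1-s)^2=s+\tfrac12 s^2+O(s^3).
\]
Taking square roots,
\[
M(1-s)=s^{1/2}\bigl(1+\tfrac14 s+O(s^2)\bigr)=s^{1/2}+\tfrac14 s^{3/2}+O(s^{5/2}).
\]

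The main obstacle is bookkeeping the error terms so that the remainder really is $O(s^3)$ inside the square, uniformly in $y$. The two sources are the factor $\sqrt{1-s+x/2}$ and the product $(1-sy)(\cdots)$. Each contributes only relative corrections of size $O(s)$ to the order-$s^2$ term, hence $O(s^3)$ overall; I would check this by hand.
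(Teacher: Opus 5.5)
Your proposal is correct; part (i) follows the paper's computation more carefully, and part (ii) is a mild variant of the paper's induction with a sharper constant.

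\textbf{Part (i).} You use the same substitution as the paper (your $x$ is the paper's $u$) and the same rescaling $x=s^2y$, which reduces the problem to maximising $\sqrt{2y}-y$. The difference is that you first localise the maximiser. The crude bound $f(x)\le s-x+s\sqrt{3x}$ shows that only $x\le 3s^2$ matters. You then take the supremum of an expansion whose $O(s^3)$ remainder is uniform in $y\in[0,3]$.

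The paper skips this localisation. It expands in $v=u/s^2$ as though $v$ were bounded, and asserts that the maximiser satisfies $u=\frac12 s^2+O(s^3)$. Your argument never needs the location of the maximiser, only the uniformity of the error. It is therefore the more complete version of the same computation.

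\textbf{Part (ii).} Both arguments rest on $M(1-s)\ge\sqrt{s}$, obtained by taking $p=r$. The paper inducts on $\gamma_d^{-2}\le 4d$ with the adaptive choice $s_d=\gamma_d^{-2}/(4d^2)$. You instead reuse the fixed choice $r=1-\frac{1}{2d}$ from Theorem~\ref{t:asy-bound}(i). This gives the sharper bound $\gamma_d^{-2}\le 2d$, which is consistent with the asymptotics $\gamma_d^{-2}=2d-\log d+O(1)$ of Proposition~\ref{p:asy-3}. Either bound suffices for the $O(d)$ claim.
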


\begin{proof}
    (i): By substituting for $r=1-s$, $p=1-t$, and $u=s-t$, we observe that
    \begin{align*}
        M(1-s)^2 & = \sup_{0\leq t\leq s}t\left( 1 + \sqrt{(1-t)^2 - (1-s)^2} \right)\\
        & = \sup_{0\leq u\leq s}(s-u)\left(1+ \sqrt{u(2-2s+u)} \right).
    \end{align*}
    If we substitute $u=s^2v$ for some $v$, then this gives
    \[
        (s-u)\left(1+\sqrt{u(2-2s+u)}\right) = s + s^2\left(\sqrt{2v}-v\right)+O(s^3).
    \]
    Since the function $v\mapsto \sqrt{2v}-v$ attains its maximum uniquely at $v= \frac{1}{2}$, the maximizing point for $M(1-s)^2$ satisfies $u = \frac{1}{2}s^2 + O(s^3)$. Therefore,
    \[
        M(1-s)^2 = s + \frac{1}{2}s^2 + O(s^3)
    \]
    and so, the claim follows.

    (ii): First note that, for every $0\leq s\leq1$, we have
    \[
        M(1-s) = \sup_{1-s\leq p\leq 1} \sqrt{(1-p)\left(1+\sqrt{p^2-(1-s)^2}\right)} \geq \sqrt{s}
    \]
    upon selecting $p=1-s$.
    
    We now use an inductive argument to show that $\gamma_d^{-2} \leq 4d$ for every positive integer $d\geq1$. For this, assume that $\gamma_d^{-2}\leq 4d$.  In turn, upon selecting $s_d = \frac{\gamma_d^{-2}}{4d^2}$,
    \[
        \gamma_{d+1}  = \sup_{0\leq s\leq 1} \frac{M(1-s) + d(1-s)\gamma_d}{d+1}\geq \frac{\sqrt{s_d}+d(1-s_d)\gamma_d}{d+1} = \gamma_d \frac{d+\frac{\gamma_d^{-2}}{4d}}{d+1}.
    \]
    Since $\gamma_d^{-2}\leq 4d$, it now follows that
    \[
        \gamma_{d+1}^{-2}\leq \gamma_d^{-2}\left( \frac{d+1}{d+\frac{\gamma_d^{-2}}{4d}}\right)^2 \leq 4(d+1).
    \]
\end{proof}

Using Lemma \ref{l:asy-1}, we now extrapolate a finer expansion for $\gamma_d^{-2}$.

\begin{lemma}\label{l:asy-2}
    We have that
    \[
        \gamma_{d+1}^{-2} = \left( 1 + \frac{2}{d}+\frac{1}{d^2}\right)\gamma_d^{-2} - \left(\frac{1}{2d^2}+\frac{1}{d^3}\right)\gamma_d^{-4} + \frac{1}{8d^4}\gamma_d^{-6}+ O(d^{-2}) \quad \text{as $d\rightarrow\infty$}.
    \]
\end{lemma}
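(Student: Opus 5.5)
The plan is to analyse the one-step optimisation defining $\gamma_{d+1}$ directly, treating $\gamma_d$ as given. Write $g=\gamma_d$, $x=g^{-2}$ and $y=dg=d x^{-1/2}$. By Lemma \ref{l:asy-1}(ii) we have $x=O(d)$, hence $y\geq c\sqrt{d}\to\infty$. Substituting $r=1-s$ gives
\[
(d+1)\gamma_{d+1}=y+\sup_{0\leq s\leq 1}\bigl(M(1-s)-ys\bigr).
\]
So everything reduces to computing $F(y):=\sup_{0\le s\le 1}(M(1-s)-ys)$ for large $y$, with an explicit error term.

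\textbf{Step 1: localise the maximiser.} $M$ is bounded: since $A(p,r)\le\sqrt{2}$, we have $0\le M\le\sqrt2$. Also $F(y)\geq M(1-s)-ys\ge\sqrt{s}-ys$, and the choice $s=1/(4y^2)$ gives $F(y)\geq 1/(4y)>0$. For $s\geq y^{-1}\cdot K$ with $K$ large, $M(1-s)-ys\le \sqrt2 - K<0$. So the supremum is taken over $s=O(1/y)$. Using the bound $M(1-s)\le C\sqrt{s}$ for small $s$ (from Lemma \ref{l:asy-1}(i)), the expression $C\sqrt s-ys$ is negative unless $s\le C^2/y^2$. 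Hence the maximiser satisfies $s=O(y^{-2})$, and in this range the expansion of Lemma \ref{l:asy-1}(i) applies.

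\textbf{Step 2: expand $F$.} Lemma \ref{l:asy-1}(i) gives
\[
M(1-s)-ys=\sqrt s+\tfrac14 s^{3/2}-ys+O(y^{-5}).
\]
The unperturbed problem $\sqrt s-ys$ has maximiser $s_0=1/(4y^2)$ and value $1/(4y)$. The perturbation $\tfrac14 s^{3/2}$ shifts the value by its value at $s_0$ plus a second-order term. That term is of size (perturbation derivative)$^2$ divided by the curvature, which is $O(y^{-2}\cdot y^{-3})=O(y^{-5})$. Since $\tfrac14 s_0^{3/2}=1/(32y^3)$, this yields
\[
(d+1)\gamma_{d+1}=y\Bigl(1+\tfrac{1}{4y^2}+\tfrac{1}{32y^4}+O(y^{-6})\Bigr).
\]
Making the envelope argument rigorous, i.e.\ controlling the maximiser shift, is the main obstacle. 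I would handle it by bracketing. The lower bound comes from evaluating at $s_0$. For the upper bound, use $\sqrt s-ys\le 1/(4y)$ together with $\tfrac14 s^{3/2}\le\tfrac14 s_0^{3/2}+O(y^{-5})$. The latter holds after further localising to $|s-s_0|=O(y^{-4})$, which follows from the strict concavity of $\sqrt s-ys$ near $s_0$.

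\textbf{Step 3: invert and square.} From Step 2,
\[
\gamma_{d+1}^{-2}=(d+1)^2y^{-2}\Bigl(1-\tfrac{1}{2y^2}+\tfrac{1}{8y^4}+O(y^{-6})\Bigr).
\]
Here the $y^{-4}$ coefficient is $-\tfrac1{16}+\tfrac3{16}=\tfrac18$. Now substitute $y^{-2}=x/d^2$ and $(d+1)^2/d^2=1+\tfrac2d+\tfrac1{d^2}$. This produces $(1+\tfrac2d+\tfrac1{d^2})x-(\tfrac1{2d^2}+\tfrac1{d^3})x^2+\tfrac1{8d^4}x^3$, plus leftover terms such as $\tfrac{x^2}{2d^4}$ and $\tfrac{x^3}{d^5}$, together with $d^2 y^{-8}$. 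Since $x=O(d)$ and $y^{-2}=O(d^{-1})$, all of these leftovers are $O(d^{-2})$. This gives the stated recursion.
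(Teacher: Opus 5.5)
Your proposal is correct and follows essentially the paper's argument. You localise the maximiser to $s=O\bigl((d\gamma_d)^{-2}\bigr)$, expand with Lemma \ref{l:asy-1}(i) in the variable $\sqrt{s}$ to get $\sup_s\bigl(M(1-s)+d(1-s)\gamma_d\bigr)=d\gamma_d+\frac{1}{4d\gamma_d}+\frac{1}{32(d\gamma_d)^3}+O\bigl((d\gamma_d)^{-5}\bigr)$, then invert, square, and absorb the leftover terms using $\gamma_d^{-2}=O(d)$. The only differences are minor: the paper localises by comparing with the value $d\gamma_d$ at $s=0$ together with the global bound $M(1-s)^2\le 2s$, and it simply asserts the location of the maximiser, which your bracketing argument makes explicit.
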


\begin{proof}
    Consider some $0\leq s\leq 1$ that maximizes the value of $M(1-s) + d(1-s)\gamma_d$. At $r=1$, the numerator in the recurrence for $\gamma_{d+1}$ gives
    \[
        M(1)+d\gamma_d = A(1,1) + d\gamma_d = d\gamma_d.
    \]
    Thus, we have $M(1-s) + d(1-s)\gamma_d \geq d\gamma_d$ and so, $M(1-s) \geq d\gamma_d s$. On the other hand, for every $1-s\leq p\leq 1$, we have that $\sqrt{p^2-(1-s)^2}\leq 1$ and so,
    \[
        M(1-s)^2  = \sup_{1-s\leq p\leq 1} (1-p)\left(1+ \sqrt{p^2-(1-s)^2}\right) \leq 2s.
    \]
    Lemma \ref{l:asy-1} (ii) then implies that $s\leq \frac{2\gamma_d^{-2}}{d^2} = O(d^{-1})$. Thus, upon setting $a = s^{1/2}$, we may apply Lemma \ref{l:asy-1} (i) to conclude that
    \begin{equation}\label{eq:num-exp}
        M(1-s) + d(1-s)\gamma_d = d\gamma_d + a -d\gamma_d a^2 + \frac{1}{4}a^3 +O(a^5).
    \end{equation}
    As $\gamma_d^{-2} = O(d)$ by Lemma \ref{l:asy-1} (ii), computing the maximum of equation (\ref{eq:num-exp}) reveals
    \[
        a = \frac{1}{2d\gamma_d} + O\left(\frac{1}{d^3\gamma_d^3}\right).
    \]
    Therefore, we have that
    \[
        \sup_{0\leq s\leq 1}\left( M(1-s) + d(1-s)\gamma_d \right) = d\gamma_d + \frac{1}{4d\gamma_d} + \frac{1}{32d^3\gamma_d^3} + O\left(\frac{1}{(d\gamma_d)^{5}}\right)
    \]
    and so,
    \[
        \frac{\gamma_{d+1}}{\gamma_d} = \frac{d+\frac{\gamma_d^{-2}}{4d} + \frac{\gamma_d^{-4}}{32d^3} + O\left(\frac{\gamma_d^{-6}}{d^5}\right)}{d+1}.
    \]
    As $\gamma_d^{-2} = O(d)$ by Lemma \ref{l:asy-1} (ii), the desired expansion now follows.
\end{proof}

Now, we prove the desired asymptotic expansion for $\gamma_d^{-1}$, which thereby justifies the asymptotic bound on the universal commuting dilation constant (Theorem \ref{t:asy-bound}).

\begin{proposition}\label{p:asy-3}
    We have that
    \[
        \frac{1}{\gamma_d} = \sqrt{2d-\log d + O(1)} \quad \quad \text{as $d\rightarrow\infty$.}
    \]
\end{proposition}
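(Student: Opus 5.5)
Set $x_d=\gamma_d^{-2}$ and work directly with the recursion of Lemma \ref{l:asy-2}. The aim is to show that $y_d:=x_d-2d$ satisfies $y_{d+1}-y_d=-\frac{1}{d}+O(d^{-2})$, provided $y_d=O(\log d)$; summing over $d$ then gives $y_d=-\log d+O(1)$, and hence $\gamma_d^{-1}=\sqrt{2d-\log d+O(1)}$.

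\textbf{Step 1: one-step identity.} Substitute $x_d=2d+y_d$ into Lemma \ref{l:asy-2}:
\[
x_{d+1}=\Bigl(1+\tfrac{2}{d}+\tfrac{1}{d^2}\Bigr)x_d-\Bigl(\tfrac{1}{2d^2}+\tfrac{1}{d^3}\Bigr)x_d^2+\tfrac{1}{8d^4}x_d^3+O(d^{-2}).
\]
Assume $y_d=O(\log d)$, so $y_d/d\to 0$. The main terms expand as follows.
\begin{align*}
\Bigl(1+\tfrac{2}{d}+\tfrac{1}{d^2}\Bigr)x_d&=2d+y_d+4+\tfrac{2y_d}{d}+\tfrac{2}{d}+O(d^{-2}y_d),\\
\Bigl(\tfrac{1}{2d^2}+\tfrac{1}{d^3}\Bigr)x_d^2&=2+\tfrac{2y_d}{d}+\tfrac{4}{d}+O\bigl(\tfrac{y_d^2+y_d}{d^2}\bigr),\\
\tfrac{1}{8d^4}x_d^3&=\tfrac{1}{d}+O\bigl(\tfrac{y_d}{d^2}\bigr).
\end{align*}
Adding these gives
\[
x_{d+1}=2d+2+y_d+\bigl(\tfrac{2}{d}-\tfrac{4}{d}+\tfrac{1}{d}\bigr)+O\bigl(\tfrac{1+y_d^2}{d^2}\bigr),
\]
that is,
\[
y_{d+1}=y_d-\tfrac{1}{d}+O\bigl(\tfrac{1+y_d^2}{d^2}\bigr).
\]
The cancellation of the $y_d/d$ terms is the crucial point. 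It is exactly what makes $2d$ the correct leading term, so that the $\log d$ correction comes from the $-\frac1d$ term. I will recheck these coefficient cancellations carefully.

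\textbf{Step 2: a priori bound (main obstacle).} To justify Step 1, one first needs $y_d=O(\log d)$, or at least $y_d=o(\sqrt d)$ so that the error is summable. Lemma \ref{l:asy-1}(ii) only gives $x_d\le 4d$, i.e.\ $y_d\le 2d$, which is not enough. I would handle this by viewing the recursion as a perturbed map. Write $x_d=2d\,(1+\epsilon_d)$ and derive from Lemma \ref{l:asy-2} that
\[
\epsilon_{d+1}=\epsilon_d\Bigl(1-\tfrac{c}{d}\Bigr)+O(d^{-1}\epsilon_d^2)+O(d^{-2})
\]
with $c>0$. Checking the linear coefficient: $\frac{d}{dx}\bigl[(2/d)x-x^2/(2d^2)\bigr]$ at $x=2d$ gives relative contraction $-1/d$, so $c=1$. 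This makes $x=2d$ attracting. The term $O(d^{-1}\epsilon_d^2)$ is only controlled if $\epsilon_d$ is already small, so I need a crude bound first. For this I would use monotonicity of the map $x\mapsto x_{d+1}$ in the range $x\le 4d$, or argue directly from the exact recursion for $\gamma_d$: $\gamma_d$ is bounded below (cf.\ Theorem \ref{t:asy-bound}(i) gives $x_d<2d$) and above by a known lower bound on $C_d$. This yields $\epsilon_d$ bounded, and the contraction then gives $\epsilon_d=O(d^{-1}\log d)$, i.e.\ $y_d=O(\log d)$.

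\textbf{Step 3: summation.} With $y_d=O(\log d)$, the error term in Step 1 is $O(\log^2 d/d^2)$, which is summable. Telescoping gives
\[
y_d=y_1-\sum_{k<d}\tfrac{1}{k}+O(1)=-\log d+O(1),
\]
which proves the claim.
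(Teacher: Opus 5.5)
Your Steps 1 and 3 follow the paper's argument exactly. Your $y_d$ is the paper's $\eta_d$, and the identity $y_{d+1}=y_d-\frac1d+O\bigl(\frac{1+y_d^2}{d^2}\bigr)$ is the paper's recursion for $\eta_d$. Your coefficient bookkeeping is right, and the identity only needs $y_d=O(d)$, which is automatic. The final telescoping is also the same.

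The problem is Step 2, which you rightly call the main obstacle. You write $\epsilon_{d+1}=\epsilon_d(1-\frac1d)+O(d^{-1}\epsilon_d^2)+O(d^{-2})$ and note that the quadratic term is harmless only when $\epsilon_d$ is small. You then conclude from mere boundedness of $\epsilon_d$ that the contraction gives $\epsilon_d=O(d^{-1}\log d)$. That inference fails. When $\epsilon_d$ is of order one, an unspecified $O(d^{-1}\epsilon_d^2)$ term is as large as the linear term and could cancel it; a nonlinearity such as $-\epsilon+2\epsilon^2$ has a second zero at $\epsilon=\frac12$.

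What is missing is the global sign information that the paper uses in its cluster-point argument for convergence to the fixed point. In the paper's normalization $x_d=\gamma_d^{-2}/d$, the relation $x_{d+1}-x_d=\frac1d\bigl(x_d-\frac12x_d^2\bigr)+O(d^{-2})$ gives the exact form
\[
\epsilon_{d+1}=\epsilon_d-\frac{1}{d}\,\epsilon_d(1+\epsilon_d)+O(d^{-2}).
\]
The a priori bounds $-\frac12\le\epsilon_d\le1$ come from $C_d\ge\sqrt d$ and Lemma \ref{l:asy-1}(ii). They give $1+\epsilon_d\ge\frac12$, hence $|\epsilon_{d+1}|\le\bigl(1-\frac{1}{2d}\bigr)|\epsilon_d|+Cd^{-2}$ for large $d$.

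Even this is not enough in one pass. Rate $\frac12$ only gives $\epsilon_d=O(d^{-1/2})$, i.e. $y_d=O(\sqrt d)$, and then $y_d^2/d^2$ is not summable, so you must bootstrap. Once $\epsilon_d\to0$, the factor is eventually $1-\frac cd$ for any $c<1$, which gives $y_d=O(d^{1-c})$. Any $c>\frac12$ already makes the error in Step 1 summable, and this is precisely the paper's route. Your sharper claim $y_d=O(\log d)$ would need yet another pass using $\sum_k|\epsilon_k|/k<\infty$, and it is not needed.

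Two smaller points:
\begin{enumerate}[{\rm (i)}]
\item $y_d=o(\sqrt d)$ does not guarantee $\sum_d y_d^2/d^2<\infty$; you need something like $y_d=O(d^{1/2-\delta})$.
\item Theorem \ref{t:asy-bound}(i) bounds $C_d$, not $\gamma_d^{-1}$, so it does not literally give $\gamma_d^{-2}<2d$. Rerunning its induction on the $\gamma$-recursion with $p=r=1-\frac1{2d}$ does give it, but the bound $\gamma_d^{-2}\le4d$ from Lemma \ref{l:asy-1}(ii) is all you need.
\end{enumerate}
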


\begin{proof}
    Set $b_d = \gamma_d^{-2}$ and $x_d = \gamma_d^{-2}d^{-1}$. It now suffices to verify that $b_d = 2d-\log d+O(1)$. To this end, by Lemma \ref{l:asy-2}, we have the expansion
    \[
        b_{d+1}-b_d = 2x_d - \frac{1}{2}x_d^2 + \frac{1}{d}\left( x_d - x_d^2 + \frac{1}{8}x_d^3\right)  + O(d^{-2}).
    \]
    Thus, we see that
    \begin{equation}\label{eq:dyn}
        x_{d+1}-x_d = \frac{1}{d}\left( x_d - \frac{1}{2}x_d^2 \right) +O(d^{-2}).
    \end{equation}
    
    We claim that the difference equation given by (\ref{eq:dyn}) has $x_*=2$ as an attracting fixed point. Indeed, the function $G(x) = x- \frac{1}{2}x^2$ is positive on $(0,2)$, negative on $(2,\infty)$, and $G'(2)=-1$. In addition, note that $x_d = \gamma_d^{-2}d^{-1}\geq 1$ as $C_d\geq \sqrt{d}$ and that $(x_d)_d$ is a bounded sequence. Thus, for every $\delta>0$, there is some $\ol{y}>0$ such that $G(x) \geq \ol{y}$ for $x\in [1,2-\delta]$ and $G(x)\leq -\ol{y}$ for $x\in [2+\delta,\infty)$. In particular, by equation (\ref{eq:dyn}), for all sufficiently large $d$, we have that $x_{d+1}-x_d \geq \frac{\ol{y}}{2d}$ when $x_d\in[1,2-\delta]$ and $x_{d+1}-x_d \leq -\frac{\ol{y}}{2d}$ when $x_d\in[2+\delta, \infty)$. Now, since $(x_d)_{d\geq1}$ is bounded and the error term $O(d^{-2})$ in equation (\ref{eq:dyn}) is summable, it follows that every cluster point for $(x_d)_{d\geq1}$ must lie in $[2-\delta,2+\delta]$. As $\delta>0$ was arbitrary, $x_*=2$ is then the only possible cluster point. Therefore, $x_*=2$ is an attracting fixed point and we have $b_d = 2d+o(d)$.
    
    Express $x_d = 2+ \frac{\eta_d}{d}$ where $\eta_d = o(d)$. It now suffices to show that $\eta_d = -\log d+ O(1)$. For this, substitute $\eta_d$ into the expansion in Lemma \ref{l:asy-2} to obtain
    \begin{equation}\label{eq:boot-bound}
        \eta_{d+1} = \eta_d - \frac{1}{d} + O\left(\frac{1+\eta_d^2}{d^2}\right).
    \end{equation}
    We now verify that the error term is summable. For this, set $y_d = \frac{\eta_d}{d}$. As $G(2+y) = -y-\frac{1}{4}y^2$, equation (\ref{eq:dyn}) gives that $y_{d+1} = \left(1-\frac{1}{d} \right) y_d - \frac{1}{2d}y_d^2 + O(d^{-2})$. As $x_* = 2$ is an attracting fixed point, there are then constants $C>0$ and $\frac{1}{2}<c<1$ such that, for all sufficiently large $d$,
    \begin{equation}\label{eq:decay}
        \left| y_{d+1}\right| \leq \left( 1 - \frac{c}{d}\right)\left|y_d\right| + \frac{C}{d^2}.
    \end{equation}
    Since $c<1$, iterating equation (\ref{eq:decay}) reveals that $\frac{\eta_d}{d} = O(d^{-c})$. It then follows that $\eta_d = O(d^{1-c})$. As $c>\frac{1}{2}$, we have that $\frac{1+\eta_d^2}{d^2}= O(d^{-2})+O(d^{-2c})$ is summable. Therefore, summing over the recurrence for $(\eta_d)_{d\geq1}$ up to the $d$-th iterate, we obtain that $\eta_d = -\log d + O(1)$ and thus, $b_d = 2d+\eta_d = 2d-\log d + O(1)$.
\end{proof}

\bibliographystyle{plain}
\bibliography{bbl}

@book{agler2002pick,
  title={Pick interpolation and {H}ilbert function spaces},
  author={Agler, Jim and McCarthy, John and Kramer, Linus E},
  volume={44},
  year={2002},
  publisher={American Mathematical Soc.}
}

@article{ben2002tractable,
  title={On tractable approximations of uncertain linear matrix inequalities affected by interval uncertainty},
  author={Ben-Tal, Aharon and Nemirovski, Arkadi},
  journal={SIAM Journal on Optimization},
  volume={12},
  number={3},
  pages={811--833},
  year={2002},
  publisher={SIAM}
}

@article{bluhm2025inclusion,
  title={Inclusion constants for free spectrahedra with applications to quantum incompatibility},
  author={Bluhm, Andreas and Evert, Eric and Klep, Igor and Magron, Victor and Nechita, Ion},
  journal={arXiv preprint arXiv:2512.17706},
  year={2025}
}

@article{bluhm2023polytope,
  title={Polytope compatibility—{F}rom quantum measurements to magic squares},
  author={Bluhm, Andreas and Nechita, Ion and Schmidt, Simon},
  journal={Journal of Mathematical Physics},
  volume={64},
  number={12},
  year={2023},
  publisher={AIP Publishing}
}

@article{bluhm2022incompatibility,
  title={Incompatibility in General Probabilistic Theories, Generalized Spectrahedra, and Tensor Norms},
  author={Bluhm, Andreas and Jen{\v{c}}ov{\'a}, Anna and Nechita, Ion},
  journal={Communications in Mathematical Physics},
  volume={393},
  number={3},
  pages={1125--1198},
  year={2022},
  publisher={Springer}
}

@article{bluhm2020compatibility,
  title={Compatibility of quantum measurements and inclusion constants for the matrix jewel},
  author={Bluhm, Andreas and Nechita, Ion},
  journal={SIAM Journal on Applied Algebra and Geometry},
  volume={4},
  number={2},
  pages={255--296},
  year={2020},
  publisher={SIAM}
}

@article{bluhm2018joint,
  title={Joint measurability of quantum effects and the matrix diamond},
  author={Bluhm, Andreas and Nechita, Ion},
  journal={Journal of Mathematical Physics},
  volume={59},
  number={11},
  year={2018},
  publisher={AIP Publishing}
}

@book{brown2008textrm,
  title={{$C^{\ast} $}-Algebras and Finite-Dimensional Approximations},
  author={Brown, Nathanial Patrick and Ozawa, Narutaka},
  volume={88},
  year={2008},
  publisher={American Mathematical Soc.}
}

@inproceedings{chakraborty2019power,
  title={The Power of Block-Encoded Matrix Powers: Improved Regression Techniques via Faster {H}amiltonian Simulation},
  author={Chakraborty, Shantanav and Gily{\'e}n, Andr{\'a}s and Jeffery, Stacey},
  booktitle={46th {I}nternational {C}olloquium on {A}utomata, {L}anguages, and {P}rogramming ({I}{C}{A}{L}{P} 2019)},
  pages={33--1},
  year={2019},
  organization={Schloss Dagstuhl--Leibniz-Zentrum f{\"u}r Informatik}
}

@article{clouatre2025lifting,
  title={Lifting {S}ylvester equations: singular value decay for non-normal coefficients},
  author={Clou{\^a}tre, Rapha{\"e}l and Klippenstein, Brock and Slevinsky, Richard Mika{\"e}l},
  journal={Integral Equations and Operator Theory},
  volume={97},
  number={1},
  pages={7},
  year={2025},
  publisher={Springer}
}

@article{davidson2017dilations,
  title={Dilations, inclusions of matrix convex sets, and completely positive maps},
  author={Davidson, Kenneth R and Dor-On, Adam and Shalit, Orr Moshe and Solel, Baruch},
  journal={International Mathematics Research Notices},
  volume={2017},
  number={13},
  pages={4069--4130},
  year={2017},
  publisher={Oxford University Press}
}

@article{de2020quantum,
  title={Quantum magic squares: dilations and their limitations},
  author={De las Cuevas, Gemma and Drescher, Tom and Netzer, Tim},
  journal={Journal of Mathematical Physics},
  volume={61},
  number={11},
  year={2020},
  publisher={AIP Publishing}
}

@article{fritz2017spectrahedral,
  title={Spectrahedral containment and operator systems with finite-dimensional realization},
  author={Fritz, Tobias and Netzer, Tim and Thom, Andreas},
  journal={SIAM Journal on Applied Algebra and Geometry},
  volume={1},
  number={1},
  pages={556--574},
  year={2017},
  publisher={SIAM}
}

@article{gerhold2025empirical,
  title={Empirical bounds for commuting dilations of free unitaries and the universal commuting dilation constant},
  author={Gerhold, Malte and Scherer, Marcel and Shalit, Orr},
  journal={To appear in Journal of Experimental Mathematics, arXiv pre-print arXiv:2510.12540},
  year={2025}
}

@article{gerhold2021dilations,
  title={Dilations of unitary tuples},
  author={Gerhold, Malte and Pandey, Satish K and Shalit, Orr Moshe and Solel, Baruch},
  journal={Journal of the London Mathematical Society},
  volume={104},
  number={5},
  pages={2053--2081},
  year={2021},
  publisher={Wiley Online Library}
}

@article{gerhold2022dilations,
  title={Dilations of $q$-commuting unitaries},
  author={Gerhold, Malte and Moshe Shalit, Orr},
  journal={International Mathematics Research Notices},
  volume={2022},
  number={1},
  pages={63--88},
  year={2022},
  publisher={Oxford University Press}
}

@article{gerhold2024dilation,
  title={Dilation distance and the stability of ergodic commutation relations},
  author={Gerhold, Malte and Shalit, Orr},
  journal={arXiv preprint arXiv:2406.05864},
  year={2024}
}

@book{helton2019dilations,
  title={Dilations, linear matrix inequalities, the matrix cube problem and beta distributions},
  author={Helton, J and Klep, Igor and McCullough, Scott and Schweighofer, Markus},
  volume={257},
  number={1232},
  year={2019},
  publisher={American Mathematical Society}
}

@article{hu2020quantum,
  title={A quantum algorithm for evolving open quantum dynamics on quantum computing devices},
  author={Hu, Zixuan and Xia, Rongxin and Kais, Sabre},
  journal={Scientific reports},
  volume={10},
  number={1},
  pages={3301},
  year={2020},
  publisher={Nature Publishing Group UK London}
}

@article{li2024simple,
  title={Simple cycle reservoirs are universal},
  author={Li, Boyu and Fong, Robert Simon and Tino, Peter},
  journal={Journal of Machine Learning Research},
  volume={25},
  number={158},
  pages={1--28},
  year={2024}
}

@article{manvcinska2024constant,
  title={Constant-sized robust self-tests for states and measurements of unbounded dimension},
  author={Man{\v{c}}inska, Laura and Prakash, Jitendra and Schafhauser, Christopher},
  journal={Communications in Mathematical Physics},
  volume={405},
  number={9},
  pages={221},
  year={2024},
  publisher={Springer}
}

@book{nagy2010harmonic,
  title={Harmonic analysis of operators on {H}ilbert space},
  author={Sz.-Nagy, B{\'e}la and Foias, Ciprian and Bercovici, Hari and K{\'e}rchy, L{\'a}szl{\'o}},
  year={2010},
  publisher={Springer Science \& Business Media}
}

@article{passer2019shape,
  title={Shape, scale, and minimality of matrix ranges},
  author={Passer, Benjamin},
  journal={Transactions of the American Mathematical Society},
  volume={372},
  number={2},
  pages={1451--1484},
  year={2019}
}

@article{passer2018minimal,
  title={Minimal and maximal matrix convex sets},
  author={Passer, Benjamin and Shalit, Orr Moshe and Solel, Baruch},
  journal={Journal of Functional Analysis},
  volume={274},
  number={11},
  pages={3197--3253},
  year={2018},
  publisher={Elsevier}
}

@book{paulsen2002completely,
  title={Completely bounded maps and operator algebras},
  author={Paulsen, Vern},
  number={78},
  year={2002},
  publisher={Cambridge University Press}
}

@incollection{shalit2021dilation,
  title={Dilation theory: a guided tour},
  author={Shalit, Orr},
  booktitle={Operator theory, functional analysis and applications},
  pages={551--623},
  year={2021},
  publisher={Springer}
}

@article{stinespring1955positive,
  title={Positive functions on {$C^{\ast} $}-algebras},
  author={Stinespring, W Forrest},
  journal={Proceedings of the american mathematical society},
  volume={6},
  number={2},
  pages={211--216},
  year={1955},
  publisher={JSTOR}
}

@article{sznagy1954contractions,
  title={Sur les contractions de l'espace de {H}ilbert},
  author={Sz.-Nagy, B{\'e}la},
  journal={Acta Scientiarum Mathematicarum (Szeged)},
  volume={15},
  pages={87--92},
  year={1954}
}

\end{document}